%% 
%% Copyright 2007, 2008, 2009 Elsevier Ltd
%% 
%% This file is part of the 'Elsarticle Bundle'.
%% ---------------------------------------------
%% 
%% It may be distributed under the conditions of the LaTeX Project Public
%% License, either version 1.2 of this license or (at your option) any
%% later version.  The latest version of this license is in
%%    http://www.latex-project.org/lppl.txt
%% and version 1.2 or later is part of all distributions of LaTeX
%% version 1999/12/01 or later.
%% 
%% The list of all files belonging to the 'Elsarticle Bundle' is
%% given in the file `manifest.txt'.
%% 
%% Template article for Elsevier's document class `elsarticle'
%% with harvard style bibliographic references
%% SP 2008/03/01

\documentclass[preprint,12pt]{elsarticle}
\usepackage{amsfonts,shuffle}
\usepackage{amssymb,tikz}
\usepackage{amsmath}
\usepackage{amsfonts}
\usepackage{amssymb,amsthm}
%\usepackage{epic,eepic}
%\usepackage{graphicx}

% caligraphique

\newcommand{\calL}{{\mathcal L}}

\newcommand{\calH}{{\mathcal H}}

% Lettres gothiques

% Ensemble de nombres
\newcommand{\N}{{\mathbb N}}

\newcommand{\Z}{{\mathbb Z}}
\newcommand{\Q}{{\mathbb Q}}
\newcommand{\R}{{\mathbb R}}

% Fonctions mathematiques

\newcommand{\Frac}[2]{\displaystyle \frac{#1}{#2}}
\newcommand{\Sum}[2]{\displaystyle{\sum_{#1}^{#2}}}
\newcommand{\Prod}[2]{\displaystyle{\prod_{#1}^{#2}}}

%apportHoang

\def\pol#1{\langle #1 \rangle}
\def\Lyn{{\mathcal Lyn}}

% 10pt

% \def\shuffle{\mathop{_{^{\sqcup\!\sqcup}}}}
%\usepackage{shuffle}

\gdef\stuffle{\;%
  \setlength{\unitlength}{0.0125cm}%
  \begin{picture}(20,10)(220,580)
  \thinlines
  \put(220,592){\line( 0,-1){ 10}}
  \put(220,582){\line( 1, 0){ 20}}
  \put(240,582){\line( 0, 1){ 10}}
  \put(230,592){\line( 0,-1){ 10}}
  \put(225,587){\line( 1, 0){ 10}}
  \end{picture}\;
}

\newtheorem{corollary}{Corollary}
\newtheorem{proposition}{Proposition}

\newtheorem{lemma}{Lemma}

\newtheorem{example}{Example}

% Fonctions mathematiques

\newcommand{\poly}[2]{#1 \langle #2 \rangle}

\def\QX{\poly{\Q}{X}}

\def\QY{\poly{\Q}{Y}}

\def\pol#1{\langle #1 \rangle}

\def\Sum{\displaystyle\sum}
\def\Prod{\displaystyle\prod}
\def\Frac{\displaystyle\frac}
\def\path{\rightsquigarrow}

\def\bv{\mid}

\gdef\minishuffle{{\scriptstyle \shuffle}}
\gdef\ministuffle{{\scriptstyle \stuffle}}

\usepackage[latin1]{inputenc}
\usepackage{times}
\usepackage[T1]{fontenc}
\usepackage[all]{xy}
\usepackage{dsfont}\let\mathbb\mathds
\usepackage{MnSymbol}
%\usepackage{textcomp}
%\usepackage{cite}%following issac
%\addbibresource{MyBibliographies.bib}

\def\scal#1#2{\langle #1\bv#2 \rangle}
\def\ncp#1#2{#1\langle #2\rangle}

%% define setminus
\newcommand{\mysetminusD}{\hbox{\tikz{\draw[line width=0.6pt,line cap=round] (3pt,0) -- (0,6pt);}}}

%\journal{Nuclear Physics B}

\begin{document}

\begin{frontmatter}

%% Title, authors and addresses

%% use the tnoteref command within \title for footnotes;
%% use the tnotetext command for theassociated footnote;
%% use the fnref command within \author or \address for footnotes;
%% use the fntext command for theassociated footnote;
%% use the corref command within \author for corresponding author footnotes;
%% use the cortext command for theassociated footnote;
%% use the ead command for the email address,
%% and the form \ead[url] for the home page:
%% \title{Title\tnoteref{label1}}
%% \tnotetext[label1]{}
%% \author{Name\corref{cor1}\fnref{label2}}
%% \ead{email address}
%% \ead[url]{home page}
%% \fntext[label2]{}
%% \cortext[cor1]{}
%% \address{Address\fnref{label3}}
%% \fntext[label3]{}

\title{Structure of Polyzetas and Explicit Representation\\on Transcendence Bases of Shuffle and Stuffle Algebras}

%% use optional labels to link authors explicitly to addresses:
%% \author[label1,label2]{}
%% \address[label1]{}
%% \address[label2]{}

\author{V.C. Bui$^{\flat,\sharp}$, G. H. E. Duchamp$^{\sharp}$, V. Hoang Ngoc Minh$^{\diamondsuit,\sharp}$}

\address{$^{\flat}$Hue University of Sciences, 77 - Nguyen Hue street - Hue city, Vietnam\\
$^{\sharp}$Institut Galil\'ee, LIPN - UMR 7030, CNRS - Universit\'e Paris 13, F-93430 Villetaneuse, France,\\
$^{\diamondsuit
}$Universit\'e Lille II, 1, Place D\'eliot, 59024 Lille, France}

\begin{abstract}
Polyzetas, indexed by words, satisfy shuffle and quasi-shuffle identities. In this respect, one can explore the multiplicative and algorithmic (locally finite) properties of their generating series. In this paper, we construct pairs of bases in duality on which polyzetas are established in order to compute local coordinates in the infinite dimensional Lie groups where their non-commutative generating series live. We also propose new algorithms leading to the ideal of polynomial relations, homogeneous in weight, among polyzetas (the graded kernel) and their explicit representation (as data structures) in terms of irreducible elements.
\end{abstract}

\begin{keyword}
Poincar\'e-Birkhoff-Witt basis; transcendence basis; Sch\"utzenberger's factorization; noncommutative generating series; shuffle algebra; polyzetas.
\end{keyword}
\end{frontmatter}

%% \linenumbers

%% main text
\section{Introduction}
This paper will provide transparent arguments and proofs for results presented at the  International Symposium on Symbolic and Algebraic Computation conference, Bath, 6-9 July, 2015 \cite{ISSAC2015}.

For any composition of positive integers, $s=(s_1,\ldots,s_r)$, the polyzetas \cite{cartier2}
(also called multiple zeta values \cite{zagier}) are defined by the following convergent series
\begin{eqnarray}\label{polyzetas}
\zeta(s_1,\ldots,s_r):=\sum_{n_1>\ldots>n_r>0}n_1^{-s_1}\ldots n_r^{-s_r},&\mbox{for}&s_1>1.
\end{eqnarray}
The $\Q$-algebra generated by convergent polyzetas is denoted by $\cal Z$.

Any composition $s \in (\N_+)^r$ can be associated to words \cite{SLC43,MP} of the form
$x_0^{s_1-1}x_1\ldots x_0^{s_r-1}x_1$, defined on the alphabet $X=\{x_0,x_1\}$, or the form $y_{s_1} \ldots  y_{s_r}$, defined on the alphabet $Y=\{y_s\}_{s\ge1}$. The free monoids on these alphabets are respectively denoted by $X^*$ and $Y^*$. In this respect, the weight of the composition $s$, determined by $s_1+\ldots+s_r$, is also the weight of the word $y_{s_1} \ldots  y_{s_r}$ or the length of the word $x_0^{s_1-1}x_1\ldots x_0^{s_r-1}x_1$.

Using concatenation, shuffle and quasi-shuffle products, in Section \ref{Background}, 
\begin{enumerate}
\item We will recall the definition of Hopf algebras $(\mathbb{Q}\langle X\rangle,\bullet,1_{X^*},\Delta_{\shuffle}, {\tt e})$ and
$({\Q}\langle Y\rangle,\bullet,1_{Y^*},\Delta_{\ministuffle},{\tt e})$.

\item Equipping $X$ with the (total) ordering $x_0<x_1$ and denoting by $\Lyn X$, the set of Lyndon words over $X$,
the Poincar\'e-Birkhoff-Witt (PBW) basis $\{P_w\}_{w\in X^*}$ will be expanded over
the basis $\{P_l\}_{l\in \Lyn X}$, of the free Lie algebra $\mathcal{L}ie_\mathbb{Q}\langle X\rangle$.
Its dual basis $\{S_w\}_{w\in X^*}$ contains the pure transcendence basis of the algebra
$(\Q\langle X\rangle,\shuffle,1_{X^*})$ denoted by $\{S_l\}_{l\in\Lyn X}$ \cite{RT}.

\item Similarly, equipping $Y$ with the (total) ordering $y_1>y_2>y_3>\ldots$ and denoting by $\Lyn Y$
the set of Lyndon words over $Y$, the basis $\{\Pi_l\}_{l\in\Lyn Y}$, of the Lie algebra of primitive
elements\footnote{$P$ is a primitive element if $\Delta_{\stuffle}(P)=1_{Y^*}\otimes P+P\otimes 1_{Y^*}$.
This Lie algebra is isomorphic (but not equal) to the free Lie algebra.},
and its associated PBW-basis $\{\Pi_w\}_{w\in Y^*}$ will be proposed.
The dual basis $\{\Sigma_w\}_{w\in Y^*}$ is polynomial and contains also a pure transcendence basis of the algebra
$(\mathbb{Q}\langle Y\rangle,\stuffle,1_{Y^*})$ denoted by $\{\Sigma_l\}_{l\in\Lyn Y}$ \cite{BDM,acta,VJM}.

\item We then establish the two following expressions of the diagonal series
\begin{eqnarray}
{\cal D}_X:=\sum_{w\in X^*}w\otimes w=\prod_{l\in\Lyn X}^{\searrow}\exp(S_l\otimes P_l),\\
{\cal D}_Y:=\sum_{w\in Y^*}w\otimes w=\prod_{l\in\Lyn Y}^{\searrow}\exp(\Sigma_l\otimes\Pi_l).
\end{eqnarray}

\end{enumerate}

From these, in Section \ref{Structure},
\begin{enumerate}
\item We will consider two  generating series of polyzetas\footnote{In \eqref{Zstuff},
only {\it convergent} polyzetas arise then will not need any regularization.} \cite{acta,VJM,SLC43,MP}:
\begin{eqnarray}\label{Zstuff}
Z_{\minishuffle}:=\prod_{l\in\Lyn X\mysetminusD  X}^{\searrow}\exp(\zeta(S_l)P_l)&\mbox{and}&
Z_{\ministuffle}:=\prod_{l\in\Lyn Y\mysetminusD \{y_1\}}^{\searrow}\exp(\zeta(\Sigma_l)\Pi_l).
\end{eqnarray}
The coefficients of $Z_{\minishuffle}$ (resp. $Z_{\ministuffle}$) are obtained as the finite parts
of the asymptotic expansions of the polylogarithms $\{\mathrm{Li}_w\}_{w\in X^*}$
(resp. the harmonic sums $\{\mathrm{H}_w\}_{w\in Y^*}$), at $1$ (resp. at $+\infty$),
in the scale of comparison $\{(1-z)^{a}\log^b((1-z)^{-1})\}_{a\in\Z,b\in\N}$
(resp. $\{N^{a}\mathrm{H}_1^b(N)\}_{a\in\Z,b\in\N}$, where $\mathrm{H}_1(N)$ is the classic harmonic sum
$1+1/2+\ldots+1/N$) \cite{acta}.

\item We have also defined a third one, $Z_{\gamma}$ \cite{jsc}, which satisfies, via the $\stuffle$-extended
Sch\"utzenberger's factorization on the completed quasi-shuffle Hopf algebra \cite{acta,VJM}~,
\begin{eqnarray}\label{Z_gamma}
Z_{\gamma}=e^{\gamma y_1}Z_{\ministuffle}.
\end{eqnarray}
The coefficients of $Z_{\gamma}$ are obtained as the finite parts of the asymptotic expansions of $\{\mathrm{H}_w\}_{w\in Y^*}$,
in the scale of comparison $\{N^{a}\log^b(N)\}_{a\in\Z,b\in\N}$.

In \eqref{Z_gamma}, $\gamma$ denotes the Euler's constant \cite{jsc}.

\item In order to identify the local coordinates of $Z_{\shuffle}$ (and $Z_{\ministuffle}$),
on a group of associators \cite{acta,VJM}, we will rely on the following comparison (see \cite{jsc})
\begin{eqnarray}\label{regularization}
Z_{\gamma}=B(y_1)\pi_Y(Z_{\shuffle}),\mbox{ where }
B(y_1)=\exp\biggl(\gamma y_1-\sum_{k\geq2}{\frac{(-1)^{k-1}\zeta (k)}{k}y_1^k}\biggr).
\end{eqnarray}
Here, $\pi_Y$ is a linear projection from $\mathbb{Q}\oplus\mathbb{Q}\llangle X\rrangle x_1$
to $\mathbb{Q}\llangle Y\rrangle$,
mapping $x_0^{s_1-1}x_1\ldots x_0^{s_r-1}x_1$ to $y_{s_1}\ldots y_{s_r}$, and $\pi_X$ denotes its inverse.

By {\it cancellation} \cite{acta,VJM}, \eqref{Z_gamma} and \eqref{regularization} yield  the following identity
\begin{eqnarray}\label{regularization2}
Z_{\ministuffle}=B'(y_1)\pi_Y(Z_{\shuffle}),\mbox{ where }B'(y_1)=\exp\biggl(\sum_{k\geq2}{\frac{(-1)^{k-1}\zeta (k)}{k}y_1^k}\biggr).
\end{eqnarray}

\item Simultaneously, algorithms will be also implemented in Maple to represent
polyzetas\footnote{The Maple program runs on a computer Core(TM)i5-4210U CPU @ 1.70GHz
and obtains results up to weight $12$ \cite{program}.} in terms of irreducible polyzetas
producing {\it algebraic} relations among  the local coordinates $\{\zeta(S_l)\}_{l\in\Lyn X\mysetminusD  X}$
(and $\{\zeta(\Sigma_l)\}_{l\in\Lyn Y\mysetminusD \{y_1\}}$) \cite{program}.
\end{enumerate}

To end this section, let us point out some crucial points of our purpose~:

\begin{enumerate}
\item Similar tables\footnote{They form a Gr\"obner basis of the ideal of polynomial relations among
the convergent polyzetas and the ranking of this basis is based mainly on the order of Lyndon words
\cite{bigotte,MP,elwardi}. For that, this basis is also called Gr\"obner-Lyndon basis.}
for $\{\zeta(l)\}_{l\in\Lyn X\tiny \mysetminusD X}$ have been obtained up to weight
$10$ \cite{MP}, $12$ \cite{bigotte} and $16$ \cite{elwardi}.
These differ from the zig-zag relation among the moulds of  formal polyzetas, due to Ecalle \cite{ecalle},
{\it i.e.} the commutative generating series of {\it symbolic} polyzetas (Boutet de Monvel \cite{boutet}
and Racinet \cite{racinet} have also given equivalent relations for the noncommutative generating series
of symbolic polyzetas, see also \cite{cartier2}) producing {\it linear} relations and which base themselves on
{\it regularized double shuffle relation} \cite{blumlein,hoffman,kaneko}
and different from identities among associators, due to Drinfel'd \cite{drinfeld1,drinfeld2,Furusho}.

\item In the classical theory of finite-dimensional Lie groups,
any ordered basis of Lie algebra provides a system of local coordinates
in suitable neighborhood of the group unity via an ordered product of one-parameter groups
corresponding to the ordered basis \cite{weinorman2}.
In this work, we get a perfect analogue of this picture for Hausdorff groups,
through {\it Sch\"utzenberger's factorization}, this doesn't depend on regularization (see the next remark) \cite{acta,VJM}.

Moreover, through the bridge equation \eqref{regularization} relating two elements on these groups
and by identification of local coordinates, in infinite dimension, of  their L.H.S. and R.H.S.
(which involve only convergent polyzetas) we get again a confirmation of Zagier's conjecture, up to weight $12$.
This is not a consequence of regularized double-shuffle relation (see the next remarks).

\item Of course, the generating series given in \eqref{Zstuff} and \eqref{Z_gamma} induce,
as already shown in \cite{acta,VJM}, three morphisms of (shuffle and quasi-shuffle) algebras,
studied earlier in \cite{SLC43,SLC44,jsc} and constructed in \cite{acta,VJM}
\begin{eqnarray}
\zeta_{\minishuffle}:&(\QX,\shuffle,1_{X^*})&\longrightarrow ({\cal Z},\times,1),\\
\zeta_{\ministuffle}:&(\mathbb{Q}\langle Y\rangle,\ministuffle,1_{Y^*})&\longrightarrow ({\cal Z},\times,1),\\
\gamma_{\bullet}:&(\mathbb{Q}\langle Y\rangle,\ministuffle,1_{Y^*})&\longrightarrow ({\cal Z},\times,1),
\end{eqnarray}
which satisfy, for any $u=x_0^{s_1-1}x_1\ldots x_0^{s_r-1}x_1\in x_0X^*x_1$ and $v=\pi_Y(u)$,
\begin{eqnarray}
\zeta_{\minishuffle}(u)=\zeta_{\ministuffle}(v)=\gamma_{v}=\zeta(s_1,\ldots,s_r)
\end{eqnarray}
and the generators of length (resp. weight) one, for $X^*$ (resp. $Y^*$), satisfy (see \eqref{Zstuff} and \eqref{Z_gamma})
\begin{eqnarray}\label{zeta1}
\zeta_{\minishuffle}(x_0)=\zeta_{\shuffle}(x_1)=\zeta_{\ministuffle}(y_1)=0&\mbox{and}&\gamma_{y_1}=\gamma.
\end{eqnarray}

Hence, $\zeta_{\shuffle},\zeta_{\stuffle}$ and $\gamma_{\bullet}$ are characters
of (shuffle and quasi-shuffle) Hopf algebras, and their graphs,
written as series, respectively read \cite{jsc,SLC44}
\begin{eqnarray}
\sum_{w\in X^*}\zeta_{\minishuffle}(w)w=Z_{\shuffle},&
\Sum_{w\in Y^*}\zeta_{\ministuffle}(w)w=Z_{\ministuffle},&
\sum_{w\in Y^*}\gamma_w\;w=Z_{\gamma}
\end{eqnarray}
and\footnote{They are group-like~: $\Delta_{\shuffle}(Z_{\shuffle})=Z_{\minishuffle}\otimes Z_{\minishuffle}$,
$\Delta_{\ministuffle}(Z_{\ministuffle})=Z_{\ministuffle}\otimes Z_{\ministuffle}$,
$\Delta_{\gamma}(Z_{\gamma})=Z_{\gamma}\otimes Z_{\gamma}$.}
$Z_{\minishuffle}=(\zeta_{\minishuffle}\otimes\mathrm{Id}_{X^*}){\cal D}_X,
Z_{\ministuffle}=(\zeta_{\ministuffle}\otimes\mathrm{Id}_{Y^*}){\cal D}_Y,
Z_{\gamma}=(\gamma_{\bullet}\otimes\mathrm{Id}_{Y^*}){\cal D}_Y.$

\item By \eqref{Zstuff}, for any $u,v\in\Lyn X\mysetminusD  X$ and $u'=\pi_Y(u),v'=\pi_Y(y)$, one has
\begin{eqnarray}
\zeta_{\minishuffle}(u)\zeta_{\minishuffle}(v)=\zeta_{\minishuffle}(u\shuffle v)
&\mbox{and}&
\zeta_{\ministuffle}(u')\zeta_{\ministuffle}(v')=\zeta_{\ministuffle}(u'\stuffle v').
\end{eqnarray}
By \eqref{regularization2}, for any $l\in\Lyn X\mysetminusD  X$ and $l'=\pi_Y(l)$, one has, on the other hand
\begin{enumerate}
\item[i)] $\zeta_{\minishuffle}(x_1\shuffle l-x_1l)=-\zeta_{\minishuffle}(x_1l)=-\scal{Z_{\minishuffle}}{x_1l}$,
\item[ii)] $\zeta_{\ministuffle}(y_1\stuffle l'-y_1l')=-\zeta_{\ministuffle}(y_1l')=-\scal{Z_{\ministuffle}}{y_1l'}$,
\item[iii)] $\scal{B'(y_1)}{y_1}=0$.
\end{enumerate}
This means that since \eqref{regularization2} is equivalent to \eqref{regularization}, for the quasi-shuffle product,
the regularization to $\gamma$ is equivalent to the regularization to $0$ \cite{acta,VJM}
and this yields immediately the family of regularized double shuffle relations considered in
\cite{bigotte,blumlein,racinet,kaneko,MP,elwardi} (see also \cite{cartier2,boutet,hoffman,IKZ,waldschmidt}).

Our method is then different from \cite{cartier2,boutet,waldschmidt} in which their authors suggest
the {\it simultaneous} regularization of the divergent polyzeta $\zeta(1)$ to the indeterminate $T$,
{\it i.e.} $\zeta_{\shuffle}(x_0)=\zeta_{\shuffle}(x_1)=\zeta_{\ministuffle}(y_1)=T$
(to compare with \eqref{zeta1}). Since $T$ is transcendent over $\Q$ then
it can be suitable to be specialized to $0$, as effectively done in \cite{racinet,kaneko}
and, by this way, relations among polyzetas are {\it formally} obtained depending mainly
on numerical values\footnote{Since the $\Q$-algebra of polyzetas is not a $\Q[T]$-algebra,
how then can we determine these values ?} of $T$.
\end{enumerate}

\section{Background}\label{Background}
\subsection{Generalities}
Let $Y=\{y_s\}_{s\ge1}$ be an infinite alphabet with the total  order $y_1>y_2>\ldots$.\\ $Y^*$ denotes the free monoid on $Y$ which admits the empty word, denoted by $1_{Y^*}$, as neutral element.

Let us define the commutative  product on\footnote{$\Q Y$ denotes the $\Q$-vector space generated by the alphabet $Y$, as a basis.} $\Q Y$, denoted by $\mu$ (see \cite{BDHMT,orsay}),
\begin{eqnarray}
\forall y_s,y_t\in Y,&&\mu(y_s, y_t)=y_{s+t},
\end{eqnarray}
or its dual coproduct, $\Delta_{\mu}$, defined by
\begin{eqnarray}
\forall y_s\in Y,&&\Delta_{\mu}y_s=\sum_{i=1}^{s-1}y_i\otimes y_{s-i}
\end{eqnarray}
satisfying,
\begin{eqnarray}
\forall x,y,z\in Y,&&\scal{\Delta_\mu x}{y\otimes z}=\scal{x}{\mu(y,z)}.
\end{eqnarray}

Let $\ncp{\Q}{Y}$ denote the space of polynomials on the alphabet $Y$ equipped by
\begin{enumerate}
\item The concatenation $\bullet$ (or by its associated coproduct, $\Delta_{\bullet}$).
\item The {\it shuffle} product, {\it i.e.} the commutative product defined by \cite{RT},
for any $y_s,y_t\in Y$ and $u,v,w\in Y^*$
\begin{eqnarray}
w\shuffle 1_{Y^*}&=&1_{Y^*}\shuffle w=w,\cr
y_su\shuffle y_tv&=&y_s(u\shuffle y_tv)+y_t(y_su\shuffle v)
\end{eqnarray}
or by its associated coproduct, $\Delta_{\minishuffle}$,  defined, on the letters by, 
\begin{eqnarray}
\forall y_s\in Y,&&\Delta_{\minishuffle}y_s=y_s\otimes1_{Y^*}+1_{Y^*}\otimes y_s
\end{eqnarray}
and extended so as to make it a homomorphism for the concatenation product. It satisfies
\begin{eqnarray}
\forall u,v,w\in Y^*,&&\scal{\Delta_{\minishuffle}w}{ u\otimes v}=\scal{w}{ u\shuffle v}.
\end{eqnarray}

\item The {\it quasi-shuffle} product, {\it i.e.} the commutative product defined by \cite{hoffman},
for any $y_s,y_t\in Y$ and $u,v,w\in Y^*$, 
\begin{eqnarray}
w\ministuffle 1_{Y^*}&=&1_{Y^*}\ministuffle w=w,\cr
y_su\ministuffle y_tv&=&y_s(u\ministuffle y_tv) + y_t(y_su\ministuffle v)+ \mu(y_s,y_t)(u\ministuffle v)
\end{eqnarray}
or by its associated coproduct, $\Delta_{\ministuffle}$,  defined, on the letters by,
\begin{eqnarray}
\forall y_s\in Y,&&\Delta_{\ministuffle}y_s=\Delta_{\minishuffle}y_s+\Delta_\mu y_s
\end{eqnarray}
and extended so as to make it a homomorphism for the concatenation product. It satisfies
\begin{eqnarray}
\forall u,v,w\in Y^*,&&\langle\Delta_{\ministuffle}w\mid u\otimes v\rangle=\langle w\mid u\ministuffle v\rangle.
\end{eqnarray}
Note that $\Delta_{\minishuffle}$ and $\Delta_{\ministuffle}$ are morphisms from $\ncp{\Q}{Y}$ for the concatenation 
but $\Delta_{\mu}$ is not (for example $\Delta_{\mu}(y_1^2)=y_1\otimes y_1$, whereas $\Delta_{\mu}(y_1)^2=0$).
\end{enumerate}
Hence, with the counit $\tt e$ defined by ${\tt e}(P)=\scal{P}{1_{Y^*}}$ (for any $P\in{\Q}\langle Y\rangle$).
We get two pairs of mutually dual bialgebras
\begin{eqnarray}
\calH_{\minishuffle}=({\Q}\langle Y\rangle,\bullet,1_{Y^*},\Delta_{\shuffle},{\tt e}),&&
\calH_{\minishuffle}^{\vee}=({\Q}\langle Y\rangle,\shuffle,1_{Y^*},\Delta_{\bullet},{\tt e}),\\
\calH_{\ministuffle}=({\Q}\langle Y\rangle,\bullet,1_{Y^*},\Delta_{\ministuffle},{\tt e}),&&
\calH_{\ministuffle}^{\vee}=({\Q}\langle Y\rangle,\ministuffle,1_{Y^*},\Delta_{\bullet},{\tt e}).
\end{eqnarray}
Let us then consider the following diagonal series\footnote{Of course, we have (set theoretically) $\mathcal{D}_{\minishuffle}=\mathcal{D}_{\ministuffle}$,
but their structural treatments will be different.}
\begin{eqnarray}
\mathcal{D}_{\minishuffle}=\sum_{w\in Y^*}w\otimes w&\mbox{and}&\mathcal{D}_{\ministuffle}=\sum_{w\in Y^*}w\otimes w.
\end{eqnarray}
Here, for the algebras where live in $\mathcal{D}_{\minishuffle}$ and $\mathcal{D}_{\ministuffle}$,
the operation on the right factor of the tensor product is the concatenation,
and the operation on the left factor is the shuffle and the quasi-shuffle, respectively.

By the Cartier-Quillen-Milnor and Moore (CQMM) theorem \cite{BDHMT,CMM}, the connected $\N$-graded,
co-commutative Hopf algebra $\mathcal{H}_{\minishuffle}$ is isomorphic to the enveloping algebra of the Lie algebra of its  primitive elements which is ${\calL ie}_{\Q}\langle Y\rangle$~:
\begin{eqnarray}
\calH_{\minishuffle}\cong\mathcal{U}({\calL ie}_{\Q}\langle Y\rangle)
&\mbox{and}&
\calH_{\minishuffle}^{\vee}\cong\mathcal{U}({\calL ie}_{\Q}\langle Y\rangle)^{\vee}.
\end{eqnarray}
Hence, denoting by $(l_1,l_2)$ the standard factorization\footnote{A pair of Lyndon words $(l_1,l_2)$
is called the standard factorization of $l$ if $l=l_1l_2$ and $l_2$ is the smallest nontrivial proper right factor of $l$
(for the lexicographic order) or, equivalently its longest such.} of $l\in\Lyn Y\mysetminusD  Y$, let us consider
\begin{enumerate}
\item The PBW basis $\{P_w\}_{w\in Y^*}$ constructed recursively as follows \cite{RT}
\begin{equation}\label{basisP}
\left\{\begin{array}{llll}
P_{y_s}&=&y_s&\mbox{for }y_s\in Y,\\
P_{l}&=&[P_{l_1},P_{l_2}]&\mbox{for }l\in\Lyn Y\mysetminusD  Y,\ st(l)=(l_1,l_2),\\
P_{w}&=&P_{l_1}^{i_1}\ldots P_{l_k}^{i_k}
&{\mbox{for }w=l_1^{i_1}\ldots l_k^{i_k},\mbox{ with}\atop
l_1,\ldots,l_k\in\Lyn Y,\ l_1>\ldots>l_k.}
\end{array}\right.
\end{equation}
\begin{example}
$i)$ Considering on the alphabet $Y:$
\begin{eqnarray*}
P_{y_1}&=&y_1,\quad P_{y_2}= y_2,\\
P_{y_2y_1}&=&y_2y_1-y_1y_2,\\
P_{y_3y_1y_2}&=&y_3y_1y_2-y_2y_3y_1+y_2y_1y_3-y_1y_3y_2.
\end{eqnarray*}
$ii)$ Considering on the alphabet $X=\{x_0,x_1\},\ x_0<x_1:$
\begin{eqnarray*}
P_{x_1}&=&x_1,\quad P_{x_0x_1}= x_0x_1-x_1x_0,\\
P_{x_0x_1^2}&=&x_0y_1^2-2x_1x_0x_1+y_1^2x_0,\\
P_{x_0^2x_1^2x_0x_1}&=&x_0^2x_1^2x_0x_1-x_0^2x_1^3x_0+2x_0x_1x_0x_1^2x_0+2x_1x_0x_1x_0x_0x_1\\
&-&x_1^2x_0^3x_1+x_1^2x_0^2x_1x_0-x_0x_1x_0^2x_1^2-2x_0x_1^2x_0x_1x_0+x_0x_1^3x_0^2\\
&+&x_1x_0^3x_1^2-2x_1x_0^2x_1x_0x_1-x_1x_0x_1^2x_0^2.
\end{eqnarray*}
\end{example}

\item and, by duality\footnote{The dual family, {\it i.e.} the set of coordinates forming a basis 
in the algebraic dual which is here the space of noncommutative series,
but as the enveloping algebra under consideration is graded in finite dimensions (by the multidegree),
these series are in fact multi-homogeneous polynomials.}, the basis
$\{S_w\}_{w\in Y^*}$ of $({\Q}\langle Y\rangle,\shuffle)$, {\it i.e.}
\begin{eqnarray}
\forall u,v\in Y^*,\langle P_u\mid S_v\rangle=\delta_{u,v}.
\end{eqnarray}
This linear basis can be computed recursively as follows \cite{RT}
\begin{equation}\label{basisS}
\left\{\begin{array}{llll}
S_{y_s}&=&y_s,&\mbox{for }y_s\in Y,\\
S_l&=&y_sS_u,&\mbox{for }l=y_su\in\Lyn Y,\\
S_w&=&\Frac{S_{l_1}^{\shuffle i_1}\shuffle\ldots\shuffle S_{l_k}^{\shuffle i_k}}{i_1!\ldots i_k!}
&{\mbox{for }\displaystyle w=l_1^{i_1}\ldots l_k^{i_k},\mbox{ with}\hfill\atop\displaystyle l_1,\ldots,l_k\in\Lyn Y,\ l_1>\ldots>l_k.}
\end{array}\right.
\end{equation}

\begin{example}\label{examS_l}
$i)$ Considering on the alphabet $Y:$
\begin{eqnarray*}
S_{y_1}&=&y_1,\\
S_{y_2}&=&y_2,\\
S_{y_2y_1}&=&y_2y_1,\\
S_{y_3y_1y_2}&=&y_3y_2y_1+y_3y_1y_2.
\end{eqnarray*}
$ii)$ Considering on the alphabet $X:$
\begin{eqnarray*}
S_{x_1}&=&x_1,\\
S_{x_0x_1}&=&x_0x_1,\\
S_{x_0x_1^2}&=&x_0x_1^2,\\
S_{x_0^2x_1^2x_0x_1}&=&x_0^2x_1^2x_0x_1+3x_0^2x_1x_0x_1^2+6x_0^3x_1^3.
\end{eqnarray*}
\end{example}
\end{enumerate}

Similarly, by CQMM theorem, the connected $\mathbb{N}$-graded, co-commutative Hopf algebra $\mathcal{H}_{\ministuffle}$
is isomorphic to the enveloping algebra of  its  primitive elements:
\begin{equation}
\mathrm{Prim}(\calH_{\ministuffle})=\mathrm{Im}(\pi_1)=\mathrm{span}_{\Q}\{\pi_1(w)\vert{w\in Y^*}\},\\
\end{equation}
where, for any $w\in Y^*,\pi_1(w)$ is obtained as follows  \cite{acta,VJM}
\begin{eqnarray}\label{pi1}
\pi_1(w) =w+\sum_{k=2}^{(w)}\frac{(-1)^{k-1}}k\sum_{u_1,\ldots,u_k\in Y^+}\langle w\mid u_1\ministuffle\ldots\ministuffle u_k\rangle\;u_1\ldots u_k.
\end{eqnarray}
Note that \eqref{pi1} is equivalent to the following identity
\begin{equation}\label{exp1}
w=\sum_{k\ge0}\frac1{k!}\sum_{u_1,\ldots,u_k\in Y^*}
\langle w\mid u_1\ministuffle\ldots\ministuffle u_k\rangle\;\pi_1(u_1)\ldots\pi_1(u_k).
\end{equation}
In particular, for any $y_s\in Y$, the primitive polynomial $\pi_1(y_s)$ is given by
\begin{eqnarray}\label{pi1bis}
\pi_1(y_s)=y_s+\sum_{i=2}^s\frac{(-1)^{i-1}}{l}\sum_{j_1,\ldots,j_i\ge1,j_1+\ldots+j_i=s}y_{j_1}\ldots y_{j_i}.
\end{eqnarray}
\begin{example}
$\pi_1(y_1)=y_1,\ \pi_1(y_2)=y_2-\frac12y_1^2,\ \pi_1(y_3)=y_3-\frac12(y_1y_2+y_2y_1)+\frac13y_1^3$.
\end{example}
As previously, the expressions (\ref{pi1bis}) are equivalent to
\begin{eqnarray}
y_s=\sum_{i\ge1}\frac{1}{i!}\sum_{s_1+\ldots +s_i=s}\pi_1(y_{s_1})\ldots\pi_1(y_{s_i}),\quad y_s\in Y\label{pi1ter}.
\end{eqnarray}
\begin{example}
$$\begin{array}{rcl}
y_1&=&\pi_1(y_1),\\
y_2&=&\pi_1(y_2)+\frac{1}{2!}\pi_1(y_1)^2,\\
y_3&=&\pi_1(y_3)+\frac{1}{2!}\left(\pi_1(y_1)\pi_1(y_2)+\pi_1(y_2)\pi_1(y_1)\right)+\frac{1}{3!}\pi_1(y_1)^3.
\end{array}$$
\end{example}

Now let us consider the (endo-)morphism of algebras
$\phi:(\Q\pol{Y},\bullet,1)\rightarrow (\Q\langle Y\rangle,\bullet, 1)$ satisfying $\phi(y_k)=\pi_1(y_k)$; it can be shown that $\phi$ is an automorphism of 
$\ncp{\Q}{Y}$. Then we have \cite{VJM},
\begin{enumerate}
\item[i)] $\phi$ realizes an isomorphism from the bialgebra $(\Q\pol{Y},\bullet,\Delta_{\minishuffle},{\tt e})$
to the bialgebra $({\Q}\langle Y\rangle,\bullet,\Delta_{\ministuffle},{\tt e})$.
\item[ii)] In particular, we have the following commutative diagram
$$\def\commutatif{\ar@{}[rd]|{\circlearrowleft}}
      \xymatrix{\relax
    \Q\pol{Y}\ar[r]^-{\Delta_{\minishuffle}} \ar[d]_{\phi}& \Q\pol{Y}\otimes\Q\pol{Y}\ar[d]^-{\phi\otimes\phi}\\
    {\Q}\langle Y\rangle\ar[r]_-{\Delta_{\ministuffle}}&{\Q}\langle Y\rangle\otimes{\Q}\langle Y\rangle.}$$
\item[iii)]  $\calH_{\ministuffle}\cong\mathcal{U}(\mathrm{Prim}(\calH_{\ministuffle}))$ and
$\calH_{\ministuffle}^{\vee}\cong\mathcal{U}(\mathrm{Prim}(\calH_{\ministuffle}))^{\vee}$.
\item[iv)] The dual bases $\{\Pi_w\}_{w\in Y^*}$ and $\{\Sigma_w\}_{w\in Y^*}$ of respectively 
$\mathcal{U}(\mathrm{Prim}(\calH_{\ministuffle}))$ and $\mathcal{U}(\mathrm{Prim}(\calH_{\ministuffle}))^{\vee}$
can be obtained as images, respectively by $\phi$ and $\check\phi^{-1}$, of respectively $\{P_w\}_{w\in Y^*}$ and $\{S_w\}_{w\in Y^*}$.
\end{enumerate}
More precisely,
\begin{enumerate}
\item The PBW basis $\{\Pi_w\}_{w\in Y^*}$ for $\mathcal{U}(\mathrm{Prim}(\calH_{\ministuffle}))$ can be cons\-truc\-ted recursively as follows \cite{,BDM,acta,VJM}
\begin{equation}
\left\{\begin{array}{llll}
\Pi_{y_s}&=&\pi_1(y_s)& \mbox{for }y_s\in Y,\\
\Pi_{l}&=&[\Pi_{l_1},\Pi_{l_2}]&\mbox{for }l\in\Lyn Y\mysetminusD  Y,\ st(l)=(l_1,l_2),\\
\Pi_{w}&=&\Pi_{l_1}^{i_1}\ldots\Pi_{l_k}^{i_k}
&{\displaystyle\mbox{for }w=l_1^{i_1}\ldots l_k^{i_k},\mbox{ with}\hfill\atop\displaystyle l_1,\ldots,l_k\in\Lyn Y,\ l_1>\ldots>l_k.}
\end{array}\right.
\end{equation}

\begin{example}
$$\begin{array}{rcl}
\Pi_{y_1}&=&y_1,\\
\Pi_{y_2}&=&y_2-\frac{1}{2}y_1^2,\\
\Pi_{y_2y_1}&=&y_2y_1-y_1y_2,\\
\Pi_{y_3y_1y_2}&=&y_3y_1y_2-\frac{1}{2}y_3y_1^3-y_2y_1^2y_2
+\frac{1}{4}y_2y_1^4-y_1y_3y_2+\frac{1}{2}y_1y_3y_1^2
+\frac{1}{2}y_1^2y_2^2\\
&-&\frac{1}{2}y_1^2y_2y_1^2-y_2y_3y_1+\frac{1}{2}y_2^2y_1^2+y_2y_1y_3+\frac{1}{2}y_1^2y_3y_1-\frac{1}{2}y_1^3y_3+\frac{1}{4}y_1^4y_2.
\end{array}$$
\end{example}
\item and, by duality, the  basis $\{\Sigma_w\}_{w\in Y^*}$ of (${\Q}\langle Y\rangle$,\stuffle), {\it i.e.}
\begin{eqnarray}
\forall u,v\in Y^*,&&\langle\Pi_u\mid \Sigma_v\rangle=\delta_{u,v}.
\end{eqnarray}
This linear basis can be computed recursively as follows  \cite{BDM,acta,VJM}
\begin{equation}\label{basisSigma}
\left\{\begin{array}{ll}
\Sigma_{y_s}=y_s,
&\mbox{for }y_s\in Y,\\
\Sigma_l\;=
\Sum_{(\smallstar)}\frac{1}{i!}y_{s_{k_1}+\ldots+s_{k_i}}\Sigma_{l_1\ldots l_n},
&\mbox{for }l=y_{s_1}\ldots y_{s_k}\in\Lyn Y,\\
\Sigma_w=\displaystyle\frac{\Sigma_{l_1}^{\ministuffle i_1}\ministuffle\ldots\ministuffle\Sigma_{l_k}^{\ministuffle i_k}}{i_1!\ldots i_k!},
&{\displaystyle\mbox{for }w=l_1^{i_1}\ldots l_k^{i_k},\mbox{ with}\hfill\atop\displaystyle l_1,\ldots,l_k\in\Lyn Y,\ l_1>\ldots>l_k.}
\end{array}\right.
\end{equation}
In $(\smallstar)$, the sum is taken over all $\{k_1,\ldots,k_i\}\allowbreak \subset \allowbreak\{1,\ldots,k\}$
and all $l_1\ge\ldots\ge l_n$ such that $(y_{s_1},\ldots,\allowbreak y_{s_k})\stackrel{*}{\Leftarrow}(y_{s_{k_1}},\ldots,y_{s_{k_i}},l_1,\ldots,l_n)$,
where $\stackrel{*}{\Leftarrow}$ denotes the transitive closure of the relation on standard sequences, denoted by $\Leftarrow$ \cite{BDM}.

Using Example \ref{examS_l}.ii), we have in general, for any $l\in\Lyn Y,\pi_X(\Sigma_l)\neq S_{\pi_Xl}$
(resp. $\Lyn X\setminus\{x_0\},\pi_Y(S_l)\neq\Sigma_{\pi_Yl}$) \cite{acta,VJM}:

\begin{example}\label{different}
$$\begin{array}{|c|c||c|c|}
\hline
l\in \Lyn Y&\Sigma_l&\pi_X(l)\in \Lyn X&\pi_YS_{\pi_X(l)}\\
\hline
y_1&y_1&x_1&y_1\\
y_2&y_2&x_0x_1&y_2\\
y_2y_1&y_2y_1+\frac{1}{2}y_3&x_0x_1^2&y_2y_1\\
{y_3y_1y_2}&y_3y_2y_1+y_3y_1y_2+y_3^2&x_0^2x_1^2x_0x_1&y_3y_1y_2+3y_3y_2y_1\\
&+\frac{1}{2}y_4y_2+\frac{1}{2}y_5y_1+\frac{1}{3}y_6&&+6y_4y_1^2\\
\hline
\end{array}$$
\end{example}
\end{enumerate}

\subsection{Local coordinates}
Following Wei-Norman's theorem \cite{weinorman2}, we know that, for a given (finite dimensional) ${\bf k}$-Lie group\footnote{Real (with ${\bf k}=\mathbb{R}$) or complex (with ${\bf k}=\mathbb{C}$).} $G$, its Lie algebra $\mathfrak{g}$, and a basis $B=(b_i)_{1\leq i\leq n}$ of $\mathfrak{g}$, there exists a neighbourhood $W$ of $1_G$ (in $G$) and $n$ {\it local coordinate} ${\bf k}$-valued analytic functions 
\begin{eqnarray*}
W\rightarrow{\bf k},&&(t_i)_{1\le i\le n}
\end{eqnarray*}
such that, for all $g\in W$,
\begin{eqnarray*}
g=\prod_{1\leq i\le n}^{\rightarrow}e^{t_i(g)b_i}=e^{t_1(g)b_1}\dots e^{t_n(g)b_n}.
\end{eqnarray*}
The proof relies on the fact that, $(t_1,\ldots,t_n)\rightarrow e^{t_1(g)b_1}\dots e^{t_n(g)b_n}$ is a local diffeomorphism from ${\bf k}^n$ to $G$ at a neighbourhood of $0$.
\begin{example}[Wei-Norman in finite dimensions]
Let $M\in Gl_+(2,\R)$ ($Gl_+(2,\R)$ denote the connected component of $1$ in the Lie group\footnote{It is the group of matrices with positive determinant.} $Gl(2,\R)$
\begin{equation*}
M=
\begin{pmatrix}
 a_{11} & a_{12}\\
 a_{21} & a_{22} 
\end{pmatrix}
\end{equation*}
In order to perform the decomposition, we will ``go back to identity'' by computing $MTDU=I$, where $I$ stands for the identity matrix,
$T$ is upper unitriangular, $D$ diagonal strictly positive and $U$ unitary,
then $M=U^{-1}D^{-1}T^{-1}$ will be the Iwasawa \cite{Bourbaki} decomposition of $M$. The decomposition algorithm goes in three
steps as follows (step 4 is a summary)
\begin{enumerate}
\item ({\bf Orthogonalization})
We perform block-computation on the columns of $M$ to obtain an orthogonal matrix
\begin{equation*}
M
\longrightarrow
\begin{pmatrix}
 a_{11} & a_{12}\\
 a_{21} & a_{22} 
\end{pmatrix}
\begin{pmatrix}
 1 & t_1\\
 0 & 1 
\end{pmatrix}
=MT
=
\begin{pmatrix}
 a_{11}^{(1)} & a_{12}^{(1)}\\
 a_{21}^{(1)} & a_{22}^{(1)} 
\end{pmatrix}
=\begin{pmatrix}
C_1^{(1)} &
C_2^{(1)} 
\end{pmatrix}
=M_1.
\end{equation*}
the both of columns are orthogonal if
$t_1=-\frac{a_{11}a_{12}+a_{21}a_{22}}{a_{11}^2+ a_{21}^2}.
$
\item ({\bf Normalization}) We normalize $M_1$,
\begin{eqnarray*}
M_2&=&
\begin{pmatrix}
C_1^{(1)} &
C_2^{(1)} 
\end{pmatrix}
\begin{pmatrix}
\frac{1}{||C_1^{(1)}||} & 0\cr
0 & \frac{1}{||C_2^{(1)}||} 
\end{pmatrix}
=M_1D\cr
&=&M_1e^{-\log(||C_1^{(1)}||)
\begin{pmatrix}
1 & 0\cr
0 & 0 
\end{pmatrix}
-\log(||C_2^{(1)}||)
\begin{pmatrix}
0 & 0\cr
0 & 1 
\end{pmatrix}}.
\end{eqnarray*}
\item ({\bf Unitarization}) As the columns of $M_2$ form an orthogonal basis and as $det(M_2)>0$, one can write
\begin{equation*}
M_2=\begin{pmatrix}
 a_{11}^{(2)} & a_{12}^{(2)}\\
 a_{21}^{(2)} & a_{22}^{(2)} 
\end{pmatrix}=
\begin{pmatrix}
\cos(t_2) & -\sin(t_2)\cr
\sin(t_2) & \cos(t_2) 
\end{pmatrix}
=e^{t_2\begin{pmatrix}
0 & 1\cr
-1 & 0 
\end{pmatrix}},
\end{equation*}
and as $M_2$ is in a neighbourhood of $I_2$, one has
$t_2=\arctan(\frac{a_{21}}{a_{11}})$.
\item ({\bf Summary})
\begin{equation*}
MTD=M_2=e^{\arctan(\frac{a_{21}}{a_{11}})
\begin{pmatrix}
0 & 1\cr
-1 & 0 
\end{pmatrix}},
\end{equation*}
hence
\begin{eqnarray*}
M&=&e^{\arctan(\frac{a_{21}}{a_{11}})
\tiny{\begin{pmatrix}
0 & 1\cr
-1 & 0 
\end{pmatrix}}}D^{-1}T^{-1}\cr
&&\cr
&=&e^{\arctan(\frac{a_{21}}{a_{11}})
\tiny{\begin{pmatrix}
0 & 1\cr
-1 & 0 
\end{pmatrix}}}
e^{\log(||C_1||)
\tiny{\begin{pmatrix}
1 & 0\cr
0 & 0 
\end{pmatrix}}
}
e^{
\log(||C_2^{(1)}||)
\tiny{\begin{pmatrix}
0 & 0\cr
0 & 1 
\end{pmatrix}}
}
e^{\frac{\scal{C_1}{C_2}}{||C_1||^2}
\tiny{\begin{pmatrix}
0 & 1\cr
0 & 0 
\end{pmatrix}}
}.
\end{eqnarray*}
\end{enumerate}
One then gets a Wei-Norman decomposition of $M$ with respect to the basis of the Lie algebra $\frak{gl}(2,\R)$: $\begin{pmatrix}
0 & 1\cr
-1 & 0 
\end{pmatrix},
\begin{pmatrix}
1 & 0\cr
0 & 0 
\end{pmatrix},
\begin{pmatrix}
0 & 0\cr
0 & 1 
\end{pmatrix},
\begin{pmatrix}
0 & 1\cr
0 & 0 
\end{pmatrix}$.
\end{example}

Now, in infinite dimensions, {\it i.e.} here within the algebra of double series (whose support is a subset of $Y^*\otimes Y^*$) endowed with the law $\shuffle\hat\otimes \bullet $,  we have Sch\"utzenberger's factorization(s) \cite{DT,RT} as a perfect analogue of Wei-Norman's theorem
for the group of group-like series. For $\mathcal{D}_{\minishuffle}$
\begin{eqnarray*}
\mathcal{D}_{\minishuffle}=\Prod_{l\in\Lyn Y}^{\searrow}\exp(S_l\otimes P_l)\in\calH_{\minishuffle}^{\vee}\hat\otimes\calH_{\minishuffle};
\end{eqnarray*}
or with the law $\stuffle\hat\otimes \bullet $, we also have the extension of Sch\"utzenberger's factorization for $\mathcal{D}_{\ministuffle}$ which is then \cite{BDM,acta,VJM}
\begin{eqnarray*}
\mathcal{D}_{\ministuffle}=\prod_{l\in\Lyn Y}^{\searrow}\exp(\Sigma_l\otimes\Pi_l)\in\calH_{\ministuffle}^{\vee}\hat\otimes\calH_{\ministuffle}.
\end{eqnarray*}
These can be used to provide a system of local coordinates on the Hausdorff group ({\it i.e.} group of group-like elements\footnote{In fact, these series are respectively characters for $\shuffle$ or $\stuffle$.}). Applying these factorizations to the multiple zeta functions $\zeta_{\shuffle}, \zeta_{\stuffle}$, or to $Z_{\shuffle}$ and $Z_{\stuffle}$ (which are all group-like), we have the representations
\begin{eqnarray*}
Z_{\shuffle}=\prod_{l\in\Lyn X\mysetminusD X}^{\searrow}e^{\zeta(S_l)P_l}&\text{and}&Z_{\stuffle}=\prod_{l\in\Lyn Y\mysetminusD \{y_1\}}^{\searrow}e^{\zeta(\Sigma_l)\Pi_l}.
\end{eqnarray*}
 It means that all relations among polyzetas which can be seen here will be taken from relations among their local coordinates. Our method is to use identity \eqref{regularization2} to reduce relations between the two systems of local coordinates $\{\zeta(S_l)\}_{l\in \Lyn X}$ and $\{\zeta(\Sigma_l)\}_{l\in \Lyn Y}$.

\section{Structure of polyzetas}\label{Structure}
\subsection{Representations of polynomials on bases}\label{representation}
The aim of this subsection is to provide a method to represent any polynomial of $\Q\langle Y\rangle$ in terms of each basis $\{P_w\}_{w\in Y^*}$, $\{S_w\}_{w\in Y^*}$, $\{\Pi_w\}_{w\in Y^*}$ or $\{\Sigma_w\}_{w\in Y^*}$. 

Recall that the bases $\{P_w\}_{w\in Y^*}$ and $\{\Pi_w\}_{w\in Y^*}$ are homogeneous and upper triangular, the bases $\{S_w\}_{w\in Y^*}$ and $\{\Sigma_w\}_{w\in Y^*}$ are homogeneous and lower triangular\footnote{w.r.t the words and the lexicographic ordering, for example, $\Sigma_w=w+\sum_{v<w,(v)=(w)}\alpha_vv.$}. 
Without loss of generality we can assume that $P\in \Q\langle Y\rangle$ is a homogeneous polynomial of weight $n$, we now represent $P$ in terms of the basis $\{\Sigma_w\}_{w\in Y^*}$ by the following algorithm.
\subsubsection*{Algorithm $1$}
\textbf{INPUT:} A homogeneous polynomial $P$ of weight $n$.

\textbf{OUTPUT:} The representation of $P$ in terms of the basis $\{\Sigma_w\}_{w\in Y^*}$.
\begin{itemize}
\item[\textbf{Step 1.}]  We choose the leading term\footnote{This term includes the greatest word in the support of $P$ and its coefficient.} of $P$, assumed $\lambda_1w_1$. Expressing the word $w_1$ as follows
\begin{equation}
 w_1=\Sigma_{w_1}+\sum_{v<w_1, (v)=n}{\alpha_vv}.
\end{equation}
The polynomial $P$ can now be rewritten in the form 
\begin{equation}
P=\lambda_{w_1}\Sigma_{w_1}+\sum_{v<w_1, (v)=n}{\beta_vv}.\label{express2}
\end{equation} 
\item[\textbf{Step 2.}] We repeat \textbf{Step 1} with $P$ now understood as the polynomial $\sum_{v<w_1, (v)=n}{\beta_vv}$, and so on until the last monomial which admits the smallest word of weight $n$, $y_n$, and we really have $y_n=\Sigma_{y_n}$. At last, by 
re-expressing the coefficients, we will obtain the representation of the original in form that
\begin{equation}
 P=\sum_{v\leq w_1, (v)=n}{\lambda_v\Sigma_v}.
\end{equation}
\end{itemize}

\begin{example}
 $P:=2y_1y_2-1/2y_3.$\\
 Step 1. Since $\Sigma_{y_1y_2}=y_1y_2+y_2y_1+y_3$, we replace $y_1y_2$ with $\Sigma_{y_1y_2}-y_2y_1-y_3$ in $P$
 $$P=2\Sigma_{y_1y_2}-2y_2y_1-5/2y_3.$$
 Step 2. Since $\Sigma_{y_2y_1}=y_2y_1+1/2y_3$, we replace $y_2y_1$ with $\Sigma_{y_2y_1}-1/2y_3$ in $P$
  $$P=2\Sigma_{y_1y_2}-2\Sigma_{y_2y_1}-3/2y_3.$$
Since $y_3=\Sigma_{y_3} $, we thus get $P=2\Sigma_{y_1y_2}-2\Sigma_{y_2y_1}-3/2\Sigma_{y_3}.$
\end{example}

\begin{corollary}
For any $w\in Y^*$, we can represent\footnote{$|w|$ and $(w)$ respectively denote the length and the weight of the word $w$.}
\begin{eqnarray*}
w=&P_w+\Sum_{u>w, |u|=|w|}\alpha_u^1P_u&= S_w+\sum_{u<w,|u|=|w|}\alpha_u^2S_u,\\
w=&\Pi_w+\Sum_{v>w, (v)=(w)}\beta_v^1\Pi_v&= \Sigma_w+\sum_{v<w,(v)=(w)}\beta_v^2\Sigma_v.
\end{eqnarray*}
\end{corollary}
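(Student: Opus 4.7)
All four identities come from a single structural fact: on each finite-dimensional graded piece, the change-of-basis matrix between the word basis and the relevant PBW (or dual) family is triangular with unit diagonal, hence invertible with the same triangularity pattern. Equivalently, one applies Algorithm 1 to the input polynomial $P=w$ in each of the four settings; it terminates because each iteration strictly decreases (or increases) the leading word within a finite totally ordered set.

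The first step is to record the four triangularities. For $l\in\Lyn Y$, one proves by induction along the standard factorization \eqref{basisP} that
\[
P_l = l + \sum_{u>l,\,|u|=|l|}\gamma_u\, u,
\]
because $P_l=[P_{l_1},P_{l_2}]$ and the lex-minimal word occurring in $P_{l_1}P_{l_2}-P_{l_2}P_{l_1}$ is $l_1 l_2 = l$, while length is preserved by the bracket. The extension to arbitrary $w = l_1^{i_1}\cdots l_k^{i_k}$ (decreasing Lyndon factorization) uses Radford's theorem: the leading word in $P_{l_1}^{i_1}\cdots P_{l_k}^{i_k}$ is the concatenation of the leading words, which equals $w$. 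Dually, \eqref{basisS} gives $S_w = w + \sum_{u<w,\,|u|=|w|}\delta_u u$. The analogous statements for $\Pi_w$ and $\Sigma_w$, graded by weight, follow from \eqref{pi1bis}, which yields $\pi_1(y_s) = y_s + (\text{strictly greater words of weight }s)$ (any $y_{j_1}\cdots y_{j_i}$ with $j_1+\cdots+j_i=s$ and $i\ge 2$ begins with a letter $y_{j_1}$ satisfying $j_1<s$, hence $y_{j_1}>y_s$ in the order $y_1>y_2>\cdots$), together with the stuffle analogue of Radford's theorem, which already underpins \eqref{basisSigma}.

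Given these four triangularities, the corollary is immediate. Fix $w$ and let $V$ denote the finite-dimensional graded piece consisting of all words of length $|w|$ (for the $P,S$ statements) or weight $(w)$ (for the $\Pi,\Sigma$ statements). Writing $V$ in lexicographic order, the matrix expressing $\{P_u\}_{u\in V}$ in terms of the words is upper triangular with $1$'s on the diagonal; its inverse is therefore upper triangular with $1$'s on the diagonal, and its column indexed by $w$ reads precisely $w = P_w + \sum_{u>w,\,|u|=|w|}\alpha_u^1 P_u$. The three remaining identities follow by the same inversion argument, with the direction of triangularity dictated by whether the basis is built from Lyndon letters by bracketing (upper) or by its dual recursion (lower).

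The only point requiring genuine care is the leading-term analysis for the non-Lyndon elements of the $\Pi$-basis: one must verify that concatenation of leading terms still produces the leading term of the concatenation when triangularity is measured by weight rather than length. This follows because if $\Pi_{l_j} = l_j + \sum_{u_j>l_j,\,(u_j)=(l_j)}c_j u_j$, every summand in the expansion of $\Pi_{l_1}^{i_1}\cdots\Pi_{l_k}^{i_k}$ has total weight $(w)$, and the lex-maximum among the corresponding concatenations is obtained by concatenating the $l_j$'s themselves, namely $w$ itself, by Radford. With this observed, the triangularity passes to arbitrary $w$ and the corollary is established.
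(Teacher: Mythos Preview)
Your proof is correct and follows essentially the same approach as the paper, which states this corollary as an immediate consequence of Algorithm~1 together with the triangularity of the four bases (recalled without proof at the start of Section~\ref{representation}); you simply supply the triangularity arguments that the paper takes for granted. One slip to fix: in your final paragraph you write ``lex-maximum'' where you mean ``lex-minimum'' (since $\Pi_l=l+\text{strictly greater words}$, the word $w=l_1^{i_1}\cdots l_k^{i_k}$ is the \emph{smallest} word in the support of $\Pi_w$), and the reason the concatenation-of-leading-terms argument still works in the weight-graded setting is that two distinct words of equal weight can never be proper prefixes of one another---that is the point worth making explicit rather than invoking Radford.
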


\subsection{Identifying the local coordinates}
We now use the alphabet $X=\{x_0,x_1\}$ ordered by $x_0<x_1$. Returning to formula \eqref{regularization2}, with the bases $\{P_w\}_{w\in X^*}$ and $\{S_w\}_{w\in X^*}$ defined as \eqref{basisP} and \eqref{basisS}, we will find relations among polyzetas by identifying on the bases as local coordinates.
First, we expand $B'$, given in \eqref{regularization2}, in form of generating series of $y_1$.
\begin{lemma}\label{expandB'1}
We have
\begin{eqnarray*}
B'(y_1)=1+\sum_{m\ge2}{B^{(m)}y_1^m},&\mbox{with}&
B^{(m)}=\sum_{i=1}^{\lfloor{m}/{2}\rfloor}\sum_{k_1,\ldots,k_i\ge2\atop k_1+\ldots+k_i=m}(-1)^{m-i}\frac{\zeta(k_1)\ldots\zeta(k_i)}{k_1\ldots k_i},
\end{eqnarray*}
     where $\lfloor{m}/{2}\rfloor$ is the largest integer not greater than ${m}/{2}$.
   \end{lemma}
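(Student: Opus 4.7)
The plan is to expand the closed-form expression for $B'(y_1)$ given in identity \eqref{regularization2}. Setting $L(y_1) = \sum_{k \geq 2}\frac{(-1)^{k-1}\zeta(k)}{k}\, y_1^k$, one has $B'(y_1) = \exp L(y_1)$, and I would simply invoke the Taylor series of the exponential,
\[
\exp L(y_1) \;=\; \sum_{i \geq 0} \frac{L(y_1)^i}{i!}.
\]
Since $y_1$ is a single letter, the monomials $y_1^k$ commute, so $L(y_1)^i$ unfolds into a sum over ordered tuples $(k_1,\ldots,k_i)\in\{2,3,\ldots\}^i$:
\[
L(y_1)^i \;=\; \sum_{k_1,\ldots,k_i \geq 2} \prod_{j=1}^{i} \frac{(-1)^{k_j-1}\zeta(k_j)}{k_j}\; y_1^{k_1+\cdots+k_i}.
\]

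Collecting terms according to the total exponent $m = k_1+\cdots+k_i$ is then mechanical: the overall sign reduces to $(-1)^{m-i}$ because $\sum_{j}(k_j - 1) = m - i$ independently of the chosen composition; the constraint $k_j \geq 2$ for each $j$ forces $m \geq 2i$, whence the summation index satisfies $i \leq \lfloor m/2 \rfloor$; and for $m \geq 2$ the index $i = 0$ cannot contribute, so only the constant term ($m = 0$) yields the leading $1$ displayed in the statement.

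There is no real obstacle here, since this is a direct formal-power-series computation in a single commutative indeterminate. The only points requiring care are (i) tracking the sign across an arbitrary number of factors via the telescoping identity $\sum_j(k_j-1) = m - i$, and (ii) correctly pinning down the upper bound $\lfloor m/2 \rfloor$ on the summation index from the minimal value $k_j = 2$. Assembling these pieces and reorganizing the double sum by swapping the order of summation yields the announced expression for $B^{(m)}$.
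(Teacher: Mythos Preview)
Your argument is exactly the paper's: expand the exponential as $\sum_{i\ge 0}\frac{1}{i!}L(y_1)^i$, unfold $L(y_1)^i$ as a sum over tuples $(k_1,\ldots,k_i)$ with each $k_j\ge 2$, and regroup by the total degree $m=k_1+\cdots+k_i$, using $\sum_j(k_j-1)=m-i$ for the sign and $k_j\ge 2$ for the bound $i\le\lfloor m/2\rfloor$. One small remark: both your computation and the paper's displayed proof carry a factor $\tfrac{1}{i!}$ in front of the inner sum, which is missing from the stated formula for $B^{(m)}$; this is a typo in the statement rather than a defect in your reasoning.
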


\begin{proof}
Expanding the exponential, one has successively
\begin{eqnarray*}
B'(y_1)&=& \sum_{n\ge0} \frac{1}{n!}\left(\sum_{k\geq2}{\frac{(-1)^{k-1}\zeta (k)}{k}y_1^k}\right)^n\\
&=&\sum_{n\ge0}\frac{1}{n!}\sum_{k_1,\ldots,k_n\geq2}{\frac{(-1)^{k_1+\ldots+k_n-n}\zeta(k_1)\ldots\zeta(k_n)}{k_1\ldots k_n}y_1^{k_1+\ldots+k_n}}\\
&=&1+\sum_{m\ge2}\left(\sum_{n=1}^{\lfloor{m}/{2}\rfloor} \frac{1}{n!}\sum_{k_1,\ldots,k_n\geq2\atop k_1+\ldots+k_n=m}\frac{(-1)^{m-n}\zeta(k_1)\ldots\zeta(k_n)}{k_1\ldots k_n}\right) y_1^{m}\\
&=&1+\sum_{m\ge2}{B^{(m)}y_1^m}.
\end{eqnarray*}
\end{proof}

\begin{example}  
$$\begin{array}{rcl}
B^{(2)}&=&-\frac{\zeta(2)}{2},\\
B^{(3)}&=&\frac{\zeta(3)}{3},\\
B^{(4)}&=&-\frac{\zeta(4)}{4}+\frac{\zeta(2)^2}{2^2},\\
B^{(5)}&=&\frac{\zeta(5)}{5}-2\frac{\zeta(2)}{2}\frac{\zeta(3)}{3}.
\end{array}$$
\end{example}

\subsubsection{Identifying with respect to the basis $\{\Pi_w\}_{w\in Y^*}$}
Using the duality of the bases, we rewrite \eqref{regularization2} as follows
\begin{eqnarray}
\sum_{v\in Y^*}{\zeta_{\ministuffle}(\Sigma_{v})\Pi_{v}}=B'(y_1)\sum_{v\in Y^*}\zeta_{\shuffle}(\pi_X(\Sigma_{v})) \Pi_{v}.\label{identify1}
\end{eqnarray}
Moreover, we see that $B'(y_1)$ is a series of a single letter (like a single variable), $y_1$, and 
\begin{eqnarray*}
y_1^k\Pi_v=\Pi_{y_1}^k\Pi_v=\Pi_{y_1^kv},&&\forall k\ge1, v\in Y^*.
\end{eqnarray*}
We can then identify the coefficients in \eqref{identify1} and obtain:

\begin{proposition}\label{formulsurPi}
\begin{itemize}
\item[i)] For any $v\in Y^*\mysetminusD\ y_1Y^*$, one has\footnote{As $x_0X^*x_1$ and $Y^*\mysetminusD\ y_1Y^*$ are disjointed, the unique notation
$\zeta(P)$ is used here to replace $\zeta_{\shuffle}(P)$ or $\zeta_{\ministuffle}(P)$
if the polynomial $P$ only contains convergent words.}
$\zeta(\Sigma_{v})=\zeta(\pi_X\Sigma_{v})$.

\item[ii)] For any $ v=y_1^kw\in Y^*, k\ge1, w\in Y^*\mysetminusD\  y_1Y^*$, one has
\begin{eqnarray*}
\zeta_{\shuffle}(\pi_X\Sigma_{v})+\sum_{m=2}^k{B^{(m)}\zeta_{\shuffle}(\pi_X\Sigma_{y_1^{k-m}w})}=0.
\end{eqnarray*}
\end{itemize}
\end{proposition}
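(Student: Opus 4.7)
The plan is to extract identities (i) and (ii) as coefficients of $\Pi_w$ on both sides of \eqref{identify1}, after substituting the explicit expansion of $B'(y_1)$ provided by Lemma~\ref{expandB'1}.

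First I would exploit the fact that, because $y_1$ is the maximum letter of $Y$, no Lyndon word of length greater than one can begin with $y_1$; hence prepending copies of $y_1$ to any word preserves the non-increasing Lyndon factorization, and the PBW definition of $\{\Pi_w\}_{w\in Y^*}$ yields the multiplicative rule $y_1^m\Pi_v=\Pi_{y_1^m v}$ for every $v\in Y^*$ and every $m\ge 0$. Substituting $B'(y_1)=1+\sum_{m\ge 2}B^{(m)}y_1^m$ into \eqref{identify1} and applying this rule rewrites the right-hand side as
\begin{eqnarray*}
\sum_{v\in Y^*}\zeta_\shuffle(\pi_X\Sigma_v)\Pi_v+\sum_{m\ge 2}B^{(m)}\sum_{v\in Y^*}\zeta_\shuffle(\pi_X\Sigma_v)\Pi_{y_1^m v}.
\end{eqnarray*}
For any word $w=y_1^k u$ with $k\ge 0$ and $u\in Y^*\mysetminusD\, y_1Y^*$, the equation $y_1^m v=w$ admits a unique solution $v=y_1^{k-m}u$ precisely for $0\le m\le k$, so equating the coefficients of $\Pi_w$ on both sides of \eqref{identify1} produces
\begin{eqnarray*}
\zeta_\ministuffle(\Sigma_{y_1^k u})=\zeta_\shuffle(\pi_X\Sigma_{y_1^k u})+\sum_{m=2}^{k}B^{(m)}\,\zeta_\shuffle(\pi_X\Sigma_{y_1^{k-m}u}).
\end{eqnarray*}

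For part (i) I would set $k=0$. Using the triangularity recorded in Section~\ref{representation} together with the fact that, in the order $y_1>y_2>\ldots$, any word strictly smaller than $u$ of the same weight cannot begin with $y_1$ (otherwise it would be lex-larger than $u$), the support of $\Sigma_u$ is contained in $Y^*\mysetminusD\, y_1Y^*$; consequently $\pi_X\Sigma_u$ is supported on $x_0X^*x_1\cup\Q\,1_{X^*}$, both $\zeta_\ministuffle(\Sigma_u)$ and $\zeta_\shuffle(\pi_X\Sigma_u)$ collapse to the honest convergent polyzeta value, and the identity $\zeta(\Sigma_u)=\zeta(\pi_X\Sigma_u)$ follows.

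The main obstacle lies in part (ii), which requires showing that $\zeta_\ministuffle(\Sigma_{y_1^k u})=0$ for every $k\ge 1$. The intended argument again uses the Lyndon factorization of $y_1^k u$: since $y_1$ is maximal, this factorization is precisely the juxtaposition of $k$ copies of $y_1$ with the Lyndon factorization $l_1^{i_1}\cdots l_r^{i_r}$ of $u$ (where all $l_j<y_1$); the product clause of the recurrence \eqref{basisSigma} therefore gives
\begin{eqnarray*}
\Sigma_{y_1^k u}=\frac{1}{k!}\,\Sigma_{y_1}^{\ministuffle k}\ministuffle\Sigma_u.
\end{eqnarray*}
As $\zeta_\ministuffle$ is a character of the quasi-shuffle Hopf algebra $\calH_\ministuffle^\vee$ and $\zeta_\ministuffle(\Sigma_{y_1})=\zeta_\ministuffle(y_1)=0$, this quantity vanishes; substituting into the coefficient identity derived above yields exactly the relation stated in (ii).
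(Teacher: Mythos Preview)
Your argument is correct and follows essentially the same route as the paper's proof: both expand $B'(y_1)$ via Lemma~\ref{expandB'1}, use the rule $y_1^m\Pi_v=\Pi_{y_1^mv}$ (stated just before the proposition), and identify coefficients of $\Pi_w$ on the two sides of \eqref{identify1}. The paper's proof is terse and stops there, whereas you make explicit two points it leaves to the reader: first, why in case (i) the support of $\Sigma_u$ avoids $y_1Y^*$ so that both regularizations agree with the convergent value; second, and more importantly, why $\zeta_{\ministuffle}(\Sigma_{y_1^k u})=0$ for $k\ge 1$, which is precisely what turns the coefficient identity into the vanishing relation stated in (ii). Your justification of the latter via the product clause of \eqref{basisSigma} and the character property $\zeta_{\ministuffle}(y_1)=0$ is exactly the intended mechanism.
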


\begin{proof}
From Lemma \ref{expandB'1}, we see that $\scal{B'(y_1)}{y_1^0}=1,\scal{B'(y_1)}{y_1}=0$ and 
\begin{eqnarray*}
\forall m\ge2,&&\scal{B'(y_1)}{y_1^m}=B^{(m)}.
\end{eqnarray*}
Using the basis $\{\Pi_w\}_{w\in Y^*}$ as a coordinate system, we identify the coefficients of the two sides in \eqref{identify1} and obtain the preceding statements.
\end{proof}

\begin{example}
\begin{enumerate}  
\item For $v=y_2,\zeta(\Sigma_{y_2}) = \zeta(S_{x_0x_1})$.
\item For $v=y_2y_3,\zeta(\Sigma_{y_2y_3})=\zeta(S_{x_0x_1x_0^2x_1})-2\zeta(S_{x_0^2x_1x_0x_1})-2\zeta(S_{x_0^3x_1^2})
+\zeta(S_{x_0^4x_1})$.
\item For $v=y_1^3,-\frac12\zeta(S_{x_0x_1^2})+\frac16\zeta(S_{x_0^2x_1})+B^{(3)}=0$.
\item For $v=y_1^2y_2,\zeta(S_{x_0x_1^3})-\zeta(S_{x_0^2x_1^2})+\frac12\zeta(S_{x_0^3x_1})+B^{(2)}=0$.
\end{enumerate}  
\end{example}

\subsubsection{Identifying with respect to the basis $\{P_w\}_{w\in X^*}$}
Let us denote by\footnote{They are defined by $P'_w=\pi_X(\pi_YP_w), \ \forall w\in X^*$. Note that $\pi_YP_w=\pi_Yw=0,\ \forall w\in X^*x_0$.} $\{P'_w\}_{w\in X^*x_1}$ the reductions of $\{P_w\}_{w\in X^*x_1}$ on $\Q\oplus \Q\langle X\rangle x_1$. By applying the mapping $\pi_X$ on the two sides of \eqref{identify1} and using the duality of the bases, we can rewrite the regularization as follows
\begin{eqnarray}
B'(x_1)\sum_{u\in X^*x_1}\zeta_{\shuffle}(S_{u})P'_{u}=\sum_{u\in X^*x_1}{\zeta_{\ministuffle}(\pi_YS_{u}) P'_{u}}.\label{identify2}
\end{eqnarray}

Similarly, remarking that $B'(x_1)$ is a series of a single letter, $x_1$,
$$x_1^kP_u=P_{x_1}^kP_u=P_{x_1^ku},\quad
\forall k\ge1, u\in X^*.$$   

\begin{proposition}\label{formulsurP}
\begin{itemize}
\item[i)] For any $u\in X^*\mysetminusD\  x_1X^*,\ \zeta(S_{u})=\zeta(\pi_YS_{u})$.

\item[ii)] For any $u\in x_1X^*\mysetminusD\  x_1^2X^*,\ \zeta_{\stuffle}(\pi_YS_u)=0$.

\item[iii)] For any $ u=x_1^kw\in X^*, k\ge2, w\in X^*\mysetminusD\  x_1X^*,\ B^{(k)}\zeta(S_w)=\zeta_{\ministuffle}(\pi_YS_{u})$.
\end{itemize}
\end{proposition}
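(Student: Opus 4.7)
The plan is to proceed in close analogy with the proof of Proposition~\ref{formulsurPi} just given, with the roles of the bases $(\Pi,\Sigma)$ and $(P,S)$ exchanged. Starting from~\eqref{identify2}, one first expands $B'(x_1)=1+\sum_{k\ge 2}B^{(k)}x_1^k$ exactly as in Lemma~\ref{expandB'1}, since the argument does not depend on which single letter appears. Next, because $x_1$ is the greatest letter of $X$ and thus the greatest Lyndon word over $X$ (every other Lyndon word on $\{x_0<x_1\}$ necessarily begins with $x_0$), one has $x_1^m P_u=P_{x_1^m u}$ in the PBW basis, and, passing to the reductions, $x_1^m P'_u=P'_{x_1^m u}$. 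Identifying the coefficient of $P'_{u'}$ on both sides of~\eqref{identify2}, with $u'=x_1^k w$ and $k$ chosen maximal so that $w\in X^*\setminus x_1X^*$ (possibly $w=1_{X^*}$), yields
\begin{eqnarray*}
\zeta_{\ministuffle}(\pi_Y S_{u'}) \;=\; \sum_{m=0}^{k}\scal{B'(x_1)}{x_1^m}\,\zeta_{\shuffle}(S_{x_1^{k-m}w}).
\end{eqnarray*}

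The key auxiliary fact I would establish at this point is the vanishing $\zeta_{\shuffle}(S_{x_1^j w})=0$ for every $j\ge 1$ and every $w\in X^*\setminus x_1X^*$. Indeed, since $x_1$ is the greatest Lyndon word, the decreasing Lyndon factorization of $x_1^j w$ is obtained by prepending $x_1^j$ to that of $w$, so formula~\eqref{basisS} and the distributivity of factorials across $\shuffle$ give $S_{x_1^j w}=S_{x_1^j}\shuffle S_w=x_1^j\shuffle S_w$. Since $\zeta_{\shuffle}$ is a $\shuffle$-character and $\zeta_{\shuffle}(x_1)=0$ by~\eqref{zeta1}, one deduces $\zeta_{\shuffle}(x_1^j)=\zeta_{\shuffle}(x_1)^j/j!=0$, whence $\zeta_{\shuffle}(S_{x_1^j w})=\zeta_{\shuffle}(x_1^j)\,\zeta_{\shuffle}(S_w)=0$.

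The three assertions then follow by case analysis on $k$. For $k=0$ only the $m=0$ term survives, delivering $\zeta(\pi_Y S_u)=\zeta(S_u)$, which is (i). For $k=1$ both surviving terms vanish: the $m=0$ term by the auxiliary fact, and the $m=1$ term because $\scal{B'(x_1)}{x_1}=0$; this gives (ii). For $k\ge 2$, every contribution with $k-m\ge 1$ vanishes by the auxiliary fact, so only the $m=k$ term remains and contributes $B^{(k)}\zeta(S_w)$, which is (iii). In my view, the only nontrivial ingredient lies in justifying~\eqref{identify2} itself as a genuine identity in the convergent subspace $\mathbb{Q}\oplus\mathbb{Q}\langle\!\langle X\rangle\!\rangle x_1$: one must check that $\pi_X$ is a morphism for concatenation and that the duality between $\{S_u\}$ and $\{P_u\}$ restricts correctly to the reduced family $\{P'_u\}$. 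Once this is granted, the remainder is a coefficient comparison exactly parallel to the one carried out for Proposition~\ref{formulsurPi}.
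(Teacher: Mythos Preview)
Your argument is correct and follows essentially the same approach as the paper, namely coefficient identification in \eqref{identify2} with respect to the reduced PBW family $\{P'_u\}$, exactly parallel to the proof of Proposition~\ref{formulsurPi}. In fact your write-up is more complete than the paper's terse ``similarly'': you spell out the auxiliary vanishing $\zeta_{\shuffle}(S_{x_1^j w})=0$ for $j\ge 1$ (via $S_{x_1^j w}=S_{x_1^j}\shuffle S_w$ and $\zeta_{\shuffle}(x_1)=0$), which the paper's proof needs for parts (ii) and (iii) but leaves implicit.
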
  
\begin{proof}
Similarly to Proposition \ref{formulsurPi}, admitting the basis $\{P_w\}_{w\in X^*}$ as a coordinate system, we identify the coefficients of the two sides in \eqref{identify2} and then obtain the statements.
\end{proof}

\begin{example}
\begin{enumerate}  
\item For $u=x_0x_1,\zeta(S_{x_0x_1})=\zeta(\Sigma_{y_2})$.
\item For $u=x_0x_1x_0^2x_1,\zeta(S_{x_0x_1x_0^2x_1}) = \zeta(\Sigma_{y_2y_3})+2\zeta(\Sigma_{y_3x_2})+6\zeta(\Sigma_{y_4x_1})-5\zeta(\Sigma_{y_5})$.
\item For $u=x_1x_0x_1,\zeta(\Sigma_{y_2y_1})-\frac32\zeta(\Sigma_{y_3})=0$.
\item For $u=x_1^2x_0x_1,B^{(2)}\zeta(S_{x_0x_1})=2\zeta(\Sigma_{y_4})-\zeta(\Sigma_{y_2})^2-\zeta(\Sigma_{y_3y_1})$.
\end{enumerate}
\end{example}
  
\subsection{Algorithms to represent the structure of polyzetas}
From Proposition \ref{formulsurPi} and \ref{formulsurP}, we really have relations among polyzetas represented on the bases $\{S_w\}_{w\in X^*}$ and $\{\Sigma_w\}_{w\in Y^*}$.

In fact, thanks to the formulas \eqref{basisS} and \eqref{basisSigma}, we can easily represent these relations on the pure transcendence bases $\{S_l\}_{l\in \Lyn X}$ or $\{\Sigma_l\}_{l\in \Lyn Y}$ respectively.

In the two following algorithms, one uses these relations and the other one (\textit{Algorithm} $3$) uses as well the structures of shuffle 
and stuffle products, we will eliminate these relations, in weight, to find the structure of polyzetas represented on the bases $\{S_l\}_{l\in \Lyn X}$ and $\{\Sigma_l\}_{l\in \Lyn Y}$. The following two algorithms will be proceeded by recurrence on the weight of the words.

The same result obtained will be shown in the next subsection.

 \subsubsection*{Algorithm $2$} This algorithm uses Proposition \ref{formulsurPi} and \textit{Algorithm $1$} to establish polynomial  relations among polyzetas on the basis $\{S_l\}_{l\in \Lyn X}$ or uses Proposition \ref{formulsurP} and \textit{Algorithm} $1$ to establish relations among polyzetas on the basis $\{\Sigma_l\}_{l\in \Lyn Y}$.

We display here the second case.
 
 \textbf{INPUT:} A positive integer $n$.
 
 \textbf{OUTPUT:} The representations of polyzetas of weight $n$ in terms of irreducible elements of polyzetas on the transcendence basis $\{\Sigma_l\}_{l\in \Lyn Y}$.
\begin{itemize}
\item[\textbf{Step 1.}] We set the list, denoted by $X_n$, of all words\footnote{Note that, there are $2^{n-1}$ words of weight $n$.} of weight\footnote{In the alphabet $X$, the weight of a word is understood as the length of that word.} $n$ of $X^*x_1$.
 \item[\textbf{Step 2.}] For each $w\in X_n$, we set the polynomial  $\mathcal{P}:=\pi_Y(S_w)$ in $\QY$ and thanks to \textit{Algorithm} $1$ we represent $\zeta(\mathcal{P})$ in terms of $\{\zeta(\Sigma_l)\}_{l\in \Lyn Y}$. By taking the representations of $\zeta(\Sigma_l)$'s from the data of lower weights, we make representation in terms of irreducible elements for $\zeta(\mathcal{P})$ and proceed to establish a polynomial relation as follows:
\begin{itemize}
 \item[i)] If $w\in \Lyn X$ then we store $\zeta(\mathcal{P})$ to the variable $\zeta(S_w)$,
 \item[ii)] If $w=x_1u, u\in x_0X^*x_1$ then we make the relation $\zeta(\mathcal{P})=0$.
 \item[iii)] If $w\in x_0X^*x_1\mysetminusD  \Lyn X$, we rewrite $w$ in the form of Lyndon factorization, $w=l_1^{i_1}\ldots l_k^{i_k}$. By taking $\zeta(S_{l_j}),\ j=1\ldots k$ from the data of lower weights, we make the relation 
 $$\frac1{i_1!\ldots i_k!}\zeta(S_{l_1})^{i_1}\ldots\zeta(S_{l_k})^{i_k}=\zeta(\mathcal{P}).$$
 \end{itemize} 
 \item[\textbf{Step 3.}] We reduce the above relations to representations of polyzetas in terms of irreducible elements.
 \end{itemize}  

The next lemma will give another way to find the relations among the family $\{\zeta_{\shuffle}(S_w)\}_{w\in X^*}$ and the family $\{\zeta_{\stuffle}(\Sigma_w)\}_{w\in Y^*}$.
\begin{lemma}\label{express}
\begin{enumerate}
\item[i)] For any $l_1,l_2 \in\Lyn X\mysetminusD  X$ (resp. $l_1, l_2\in \Lyn Y\mysetminusD \{y_1\}$), one has
\begin{eqnarray*}
\zeta(S_{l_1}\shuffle S_{l_2})&=&\zeta(\pi_Y(S_{l_1})\stuffle\pi_Y(S_{l_2})),\\
\zeta(\Sigma_{l_1}\ministuffle \Sigma_{l_2})&=&\zeta(\pi_X(\Sigma_{l_1})\shuffle\pi_X(\Sigma_{l_2}))).
\end{eqnarray*}
\item[ii)] For any $w\in x_0X^*x_1$ or $w\in x_1x_0X^*x_1$ (resp. $w\in Y^*\mysetminusD\  y_1^2Y^*$), one has
\begin{eqnarray*}
\zeta_{\shuffle}(S_w)&=&\zeta_{\stuffle}(\pi_Y(S_w)),\\
\zeta_{\ministuffle}(\Sigma_w)&=&\zeta_{\shuffle}(\pi_X(\Sigma_w)).
\end{eqnarray*}
\end{enumerate}
\end{lemma}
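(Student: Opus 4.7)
The plan is to derive all four identities from two pillars: the character properties $\zeta_\shuffle(P \shuffle Q) = \zeta_\shuffle(P) \zeta_\shuffle(Q)$ and $\zeta_\stuffle(P \stuffle Q) = \zeta_\stuffle(P) \zeta_\stuffle(Q)$, together with the already-established Propositions \ref{formulsurP} and \ref{formulsurPi}, which relate $\zeta_\shuffle$ and $\zeta_\stuffle \circ \pi_Y$ (and dually $\zeta_\stuffle$ and $\zeta_\shuffle \circ \pi_X$) on the basis elements indexed by Lyndon words.

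For part (i), I would first apply the shuffle character property to obtain
$$\zeta_\shuffle(S_{l_1} \shuffle S_{l_2}) = \zeta_\shuffle(S_{l_1}) \, \zeta_\shuffle(S_{l_2}).$$
Since $l_1, l_2 \in \Lyn X \setminus X$ both lie in $X^* \setminus x_1 X^*$ (Lyndon words of length at least two over $X$ must start with the minimum letter $x_0$), Proposition \ref{formulsurP}(i) converts each factor via $\zeta_\shuffle(S_{l_i}) = \zeta_\stuffle(\pi_Y S_{l_i})$. A second application of the stuffle character property then regroups these into $\zeta_\stuffle(\pi_Y S_{l_1} \stuffle \pi_Y S_{l_2})$, which is the claimed right-hand side. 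The second identity of (i) is obtained by a symmetric chain: replace $S$ by $\Sigma$, $\shuffle$ by $\stuffle$, $\pi_Y$ by $\pi_X$, and invoke Proposition \ref{formulsurPi}(i), whose hypothesis $v \in Y^* \setminus y_1 Y^*$ is satisfied by every $l \in \Lyn Y \setminus \{y_1\}$ for the same structural reason.

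For part (ii), I would split on whether $w$ begins with the divergent letter. When $w \in x_0 X^* x_1$ (resp.\ $w \in Y^* \setminus y_1 Y^*$), the identity is directly the content of Proposition \ref{formulsurP}(i) (resp.\ \ref{formulsurPi}(i)) applied to $w$. When $w \in x_1 x_0 X^* x_1 \subseteq x_1 X^* \setminus x_1^2 X^*$ (resp.\ $w \in y_1 Y^* \setminus y_1^2 Y^*$), the right-hand side vanishes by Proposition \ref{formulsurP}(ii) (resp.\ \ref{formulsurPi}(ii) with $k=1$), and it remains to show that the left-hand side also vanishes. Since $x_1$ (resp.\ $y_1$) is the maximal letter, no Lyndon word of length at least two begins with it; hence in the decreasing Lyndon factorization $w = l_1^{i_1}\ldots l_k^{i_k}$ one necessarily has $l_1 = x_1$ (resp.\ $l_1 = y_1$) with $i_1 \geq 1$. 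By the multiplicative formulas \eqref{basisS} (resp.\ \eqref{basisSigma}), the basis element $S_w$ (resp.\ $\Sigma_w$) then contains $x_1^{\shuffle i_1}/i_1!$ (resp.\ $y_1^{\stuffle i_1}/i_1!$) as a shuffle (resp.\ stuffle) factor, and applying the character with $\zeta_\shuffle(x_1) = 0$ (resp.\ $\zeta_\stuffle(y_1) = 0$) kills the entire product.

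The step I expect to be most delicate is the combinatorial verification that $l_1$ is really the single divergent letter in the Lyndon factorization of $w$, for this is precisely what allows the character property to collapse both sides of (ii) to zero in the divergent subcase; it is also here that the explicit exclusion $w \notin x_1^2 X^*$ (resp.\ $w \notin y_1^2 Y^*$) is used on the right-hand side, to match Propositions \ref{formulsurP}(ii) and \ref{formulsurPi}(ii). Beyond that, the argument is purely formal: two character homomorphisms chained across the bridge provided by Propositions \ref{formulsurP}--\ref{formulsurPi}, together with the base values $\zeta_\shuffle(x_1) = \zeta_\stuffle(y_1) = 0$.
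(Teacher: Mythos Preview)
Your proof is correct, and it takes a genuinely different route from the paper's. The paper does not invoke Propositions~\ref{formulsurPi} and~\ref{formulsurP} at all; instead it remarks on the triangularity $S_w=w+\sum_{v<w}\alpha_v v$ and the fact that every $l\in\Lyn X\setminus X$ lies in $x_0X^*x_1$, and then appeals directly to the word-level double-shuffle identities
\[
\zeta(l_1\shuffle l_2)=\zeta(\pi_Y(l_1)\stuffle\pi_Y(l_2)),\qquad
\zeta_{\shuffle}(x_1\shuffle l)=\zeta_{\stuffle}(y_1\stuffle\pi_Y(l)),
\]
taken from external references, to conclude. Your argument instead stays entirely inside the paper: it chains the two characters $\zeta_{\shuffle}$, $\zeta_{\stuffle}$ across the bridge already built in Propositions~\ref{formulsurPi}(i) and~\ref{formulsurP}(i) for the convergent case, and for the divergent case exhibits the factor $S_{x_1}$ (resp.\ $\Sigma_{y_1}$) in $S_w$ (resp.\ $\Sigma_w$) via the Lyndon factorization to kill the left-hand side, while Propositions~\ref{formulsurP}(ii) and~\ref{formulsurPi}(ii) (with $k=1$) kill the right-hand side. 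This is arguably cleaner and more self-contained, since Propositions~\ref{formulsurPi} and~\ref{formulsurP} are already available at this point in the paper; the paper's route, by contrast, is more elementary in that it goes back to identities on individual convergent words rather than on the basis polynomials.
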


\begin{proof}
Remark that, for any $w\in X^*,S_w=w+\sum_{v<w}\alpha_vv$ and if $l \in\Lyn X\mysetminusD  X$ then $l\in x_0X^*x_1$.

Relying on properties of polyzetas on words, {\it i.e.} \cite{acta,VJM}
\begin{eqnarray*}
\zeta(l_1\shuffle l_2)&=&\zeta(\pi_Y(l_1)\stuffle \pi_Y(l_2)),\quad \forall l_1,l_2 \in\Lyn X\mysetminusD  X,\\
\zeta_{\shuffle}(x_1\shuffle l)&=&\zeta_{\stuffle}(y_1\stuffle\pi_Y(l)),\quad \forall l \in\Lyn X\mysetminusD  X,
\end{eqnarray*}
we get the expected results.
\end{proof} 

\begin{example}
For $l_1=x_0x_1, l_2=x_0^2x_1^2$ (in $\Lyn X$) and $l_1=y_2, l_2=y_3y_1$ (in $\Lyn Y$):
\begin{eqnarray*}
\zeta(S_{x_0x_1})\zeta(S_{x_0^2x_1^2})&=&\zeta(\Sigma_{y_2})\zeta(\Sigma_{y_3y_1})-\frac{1}{2}\zeta(\Sigma_{y_2})\zeta(\Sigma_{y_4}),\\
\zeta(\Sigma_{y_2})\zeta(\Sigma_{y_3y_1})&=&\zeta(S_{x_0x_1})\zeta(S_{x_0^2x_1^2})+\frac{1}{2}\zeta(S_{x_0x_1})\zeta(S_{x_0^3x_1}).
\end{eqnarray*}
For $w=x_1x_0^2x_1$ (in $x_1x_0X^*x_1$) and $w=y_1y_3$ (in $y_1Y^*$):
\begin{eqnarray*}
0&=&\frac12\zeta(\Sigma_{y_2})^2+\zeta(\Sigma_{y^3y_1})-2\zeta(\Sigma_{y_4}),\\
0&=&-\frac12\zeta(S_{x_0x_1})^2+\zeta(S_{x_0^2x_1^2})+\zeta(S_{x_0^3x_1}).
\end{eqnarray*}
\end{example}
 
\subsubsection*{Algorithm $3$} This algorithm uses Lemma \ref{express} and \textit{Algorithm $1$}
to establish polynomial relations among polyzetas on the basis $\{S_l\}_{l\in \Lyn X}$ or the basis $\{\Sigma_l\}_{l\in \Lyn Y}$.

We display here the second case.

 \textbf{INPUT:} A positive integer $n$.
 
 \textbf{OUTPUT:} The representations of polyzetas of weight $n$ in terms of irreducible elements of polyzetas on the transcendence basis $\{\Sigma_l\}_{l\in \Lyn Y}$.
 \begin{itemize}
 \item[\textbf{Step 1.}] We set a list, denoted by $X_n$, all words of weight $n$ in $x_0X^*x_1$ or $x_1x_0X^*x_1$.
 \item[\textbf{Step 2.}] We establish polynomial relations of weight $n$ as follows. For each $w\in X_n$, we make a polynomial $\mathcal{P}$ in $\Q\langle Y\rangle$ by the way:
 \begin{itemize}
 \item[i)] If $w\in \Lyn X$ then $\mathcal{P}:=\pi_Y(S_{l_1})\stuffle \pi_Y(S_{l_2})-\pi_Y(S_{l_1}\shuffle S_{l_2})$, where $(l_1,l_2)$ is the standard factorization of $w$.
 \item[ii)] If $w=x_1w_1$ then $\mathcal{P}:=\pi_Y(S_{x_1})\stuffle \pi_Y(S_{w_1})-\pi_Y(S_{x_1}\shuffle S_{w_1})$.
 \item[iii)] If $w=l_l^{i_1}\ldots l_k^{i_k},\ l_1,\ldots,l_k\in \Lyn X,\ l_1>\ldots>l_k$ then \\
 $\mathcal{P}:=\pi_Y(S_{l_1})^{\stuffle i_1}\stuffle \ldots \stuffle \pi_Y(S_{l_k})^{\stuffle i_k}-\pi_Y(S_{l_1}\shuffle \ldots\shuffle S_{l_k})$.
 \end{itemize} 
Thanks to \textit{Algorithm} $1$, we represent $\zeta(\Sigma_{\mathcal{P}})$ in terms
of $\{\zeta(\Sigma_l)\}_{l\in \Lyn Y}$ (here, $\zeta(\Sigma_l)$ are taken from the data of lower weights).
At last, we make the relation $\zeta(\Sigma_{\mathcal{P}})=0$.
 \item[\textbf{Step 3.}] We reduce the above relations to representations of polyzetas in terms of irreducible elements.
\end{itemize}
 
These algorithms produce homogeneous polynomial relations among local coordinates
$\{\zeta(\Sigma_l)\}_{l\in\Lyn Y}$ (resp. $\{\zeta(S_l)\}_{l\in\Lyn X}$).
Each identity is indexed by a Lyndon word and is not an identity of the tautological form
\begin{eqnarray}\label{tautologies}
\zeta(\Sigma_l)=\zeta(\Sigma_l)&(\mbox{or }\zeta(S_l)=\zeta(S_l)).
\end{eqnarray} 

Replacing "$=$'' by "$\longrightarrow$''  in these homogeneous polynomial relations,
we obtain a noetherian rewriting system among $\{\zeta(\Sigma_l)\}_{l\in\Lyn Y}$
(resp. $\{\zeta(S_l)\}_{l\in\Lyn X}$) in which irreducible terms are polyzetas
involved in tautologies \eqref{tautologies} and they are viewed as algebraic generators
of the algebra of convergent polyzetas \cite{acta,VJM}.

\subsection{Results}\label{resuls}
\subsubsection{Representation of polyzetas in terms of irreducible polyzetas}
The following results were computed by our package in Maple \cite{program} thanks to \textit{Algorithm} $2$ (or \textit{Algorithm} $3$). 
 We show here representations of polyzetas in terms of irreducible polyzetas of the bases indexed by Lyndon words on the two alphabets $X$ and $Y$.

For each weight $n$, the list of Lyndon words $l\in\Lyn Y$ will be displayed in the second column,
and their projection over $X$, \textit{i.e.} $\pi_X(l)\in\Lyn X$, will be displayed in the fourth column
which are also, due to a lemma by D. Perrin, the list of Lyndon words in $\Lyn Y$ (see Table $1$).
$$\begin{array}{|c||c|c||c|c|}
\hline
n&l & \zeta(\Sigma_{l}) & \pi_X(l) & \zeta(S_{\pi_X(l)})\\ \hline \hline
3 & y_2y_1 & \frac{3}{2}\zeta(\Sigma_{y_3}) & x_0x_1^2 &\zeta(S_{x_0^2x_1})\\
\hline
& y_4 & \frac{2}{5}\zeta(\Sigma_{y_2})^2 & x_0^3x_1 & \frac{2}{5}\zeta(S_{x_0x_1})^2\\              
4 &y_3y_1& \frac{3}{10}\zeta(\Sigma_{y_2})^2 & x_0^2x_1^2& \frac{1}{10}\zeta(S_{x_0x_1})^2 \\ 
&y_2y_1^2& \frac{2}{3}\zeta(\Sigma_{y_2})^2 & x_0x_1^3& \frac{2}{5}\zeta(S_{x_0x_1})^2\\ 
\hline
& y_4y_1 & -\zeta(\Sigma_{y_3})\zeta(\Sigma_{y_2})+\frac{5}{2}\zeta(\Sigma_{y_5}) & 
x_0^3x_1^2 & -\zeta(S_{x_0^2x_1})\zeta(S_{x_0x_1})+2\zeta(S_{x_0^4x_1})\\ 
& y_3y_2 & 3\zeta(\Sigma_{y_3})\zeta(\Sigma_{y_2})-5\zeta(\Sigma_{y_5}) &
x_0^2x_1x_0x_1 & -\frac{3}{2}\zeta(S_{x_0^4x_1})+\zeta(S_{x_0^2x_1})\zeta(S_{x_0x_1})\\
5 &y_3y_1^2& \frac{5}{12}\zeta(\Sigma_{y_5}) & x_0^2x_1^3 & -\zeta(S_{x_0^2x_1})\zeta(S_{x_0x_1})+2\zeta(S_{x_0^4x_1}) \\
&y_2^2y_1& \frac{3}{2}\zeta(\Sigma_{y_3})\zeta(\Sigma_{y_2})-\frac{25}{12}\zeta(\Sigma_{y_5}) & x_0x_1x_0x_1^2 & \frac{1}{2}\zeta(S_{x_0^4x_1})\\&
y_2y_1^3& \frac{1}{4}\zeta(\Sigma_{y_3})\zeta(\Sigma_{y_2})+\frac{5}{4}\zeta(\Sigma_{y_5}) & x_0x_1^4 & \zeta(S_{x_0^4x_1})\\
 \hline
& y_6 & \frac{8}{35}\zeta(\Sigma_{y_2})^3 & 
 x_0^5x_1 & \frac{8}{35}\zeta(S_{x_0x_1})^3\\&
y_5y_1 & \frac{2}{7}\zeta(\Sigma_{y_2})^3-\frac{1}{2}\zeta(\Sigma_{y_3})^2  & 
x_0^4x_1^2 & \frac{6}{35}\zeta(S_{x_0x_1})^3-\frac{1}{2}\zeta(S_{x_0^2x_1})^2\\&
y_4y_2 & \zeta(\Sigma_{y_3})^2-\frac{4}{21}\zeta(\Sigma_{y_2})^3 &
x_0^3x_1x_0x_1 & \frac{4}{105}\zeta(S_{x_0x_1})^3\\&
 y_4y_1^2 & \frac{3}{10}\zeta(\Sigma_{y_2})^3-\frac{3}{4}\zeta(\Sigma_{y_3})^2 & 
x_0^3x_1^3 & \frac{23}{70}\zeta(S_{x_0x_1})^3-\zeta(S_{x_0^2x_1})^2\\ 
6 & y_3y_2y_1 & 3\zeta(\Sigma_{y_3})^2-\frac{9}{10}\zeta(\Sigma_{y_2})^3 & 
x_0^2x_1x_0x_1^2 & \frac{2}{105}\zeta(S_{x_0x_1})^3\\ 
 &y_3y_1y_2 & -\frac{17}{30}\zeta(\Sigma_{y_2})^3+\frac{9}{4}\zeta(\Sigma_{y_3})^2
 & x_0^2x_1^2x_0x_1 & -\frac{89}{210}\zeta(S_{x_0x_1})^3+\frac{3}{2}\zeta(S_{x_0^2x_1})^2 \\ &
y_3y_1^3 & \frac{1}{21}\zeta(\Sigma_{y_2})^3 & 
x_0^2x_1^4 & \frac{6}{35}\zeta(S_{x_0x_1})^3-\frac{1}{2}\zeta(S_{x_0^2x_1})^2 \\ &
y_2^2y_1^2 & \frac{11}{63}\zeta(\Sigma_{y_2})^3-\frac{1}{4}\zeta(\Sigma_{y_3})^2
& 
x_0x_1x_0x_1^3 & \frac{8}{21}\zeta(S_{x_0x_1})^3-\zeta(S_{x_0^2x_1})^2 \\ &
y_2y_1^4 & \frac{17}{50}\zeta(\Sigma_{y_2})^3+\frac{3}{16}\zeta(\Sigma_{y_3})^2 & 
x_0x_1^5 & \frac{8}{35}\zeta(S_{x_0x_1})^3\\ 
\hline
\end{array}$$
\centerline{\qquad Table $1$: Representation of polyzetas in terms of irreducible polyzetas up to weight $6$.}
  
\subsubsection{Conclusion of the results}
Let us denote by ${\cal Z}_n$ the $\Q$-vector space generated by polyzetas of weight $n$ and $d_n$ its dimension.

From the above representations, we obtain their bases as follows:
\begin{itemize}
\item $n=2,d_2=1,$\ \ \ ${\cal Z}_2=\mathrm{span}_{\Q}\{\zeta(\Sigma_{y_2})\}              \ \ \ =\mathrm{span}_{\Q}\{\zeta(S_{x_0x_1})\}
$
\item $n=3,d_3=1,$\ \ \   
$ {\cal Z}_3=\mathrm{span}_{\Q}\{\zeta(\Sigma_{y_3})\}              \ \ \  =\mathrm{span}_{\Q}\{\zeta(S_{x_0^2x_1})\}
$
\item $n=4,d_4=1,$ \ \ \ 
$ {\cal Z}_4=\mathrm{span}_{\Q}\{\zeta(\Sigma_{y_2})^2\}
=\mathrm{span}_{\Q}\{\zeta(S_{x_0x_1})^2\} 
 $
\item $n=5,d_5=2,$ \begin{eqnarray*} {\cal Z}_5=\mathrm{span}_{\Q}\{\zeta(\Sigma_{y_5}),  \zeta(\Sigma_{y_2})\zeta(\Sigma_{y_3})\} =\mathrm{span}_{\Q}\{\zeta(S_{x_0^4x_1}), \zeta(S_{x_0x_1})\zeta(S_{x_0^2x_1})\}
 \end{eqnarray*}  
\item $n=6,d_6=2,$ \begin{eqnarray*} {\cal Z}_6&=&\mathrm{span}_{\Q}\{\zeta(\Sigma_{y_2})^3,\zeta(\Sigma_{y_3})^2\} 
=\mathrm{span}_{\Q}\{\zeta(S_{x_0x_1})^3, \zeta(S_{x_0^2x_1})^2\}
 \end{eqnarray*}  
\item $n=7,d_7=3,$ \begin{eqnarray*} {\cal Z}_7&=&\mathrm{span}_{\Q}\{\zeta(\Sigma_{y_7}), \zeta(\Sigma_{y_2})\zeta(\Sigma_{y_5}), \zeta(\Sigma_{y_2})^2\zeta(\Sigma_{y_3})\} \\       &=&\mathrm{span}_{\Q}\{\zeta(S_{x_0^6x_1}), \zeta(S_{x_0x_1})\zeta(S_{x_0^4x_1}),\zeta(S_{x_0x_1})^2\zeta(S_{x_0^2x_1})\}
 \end{eqnarray*}  
\item $ n=8,d_8=4,$ \begin{eqnarray*} {\cal Z}_8&=&\mathrm{span}_{\Q}\{\zeta(\Sigma_{y_2})^4, \zeta(\Sigma_{y_3y_1^5}), \zeta(\Sigma_{y_3})\zeta(\Sigma_{y_5}),  \zeta(\Sigma_{y_3})^2\zeta(\Sigma_{y_2})\} \\       
 &=&\mathrm{span}_{\Q}\{\zeta(S_{x_0x_1})^4, \zeta(S_{x_0^2x_1})\zeta(S_{x_0^4x_1}),\zeta(S_{x_0x_1})\zeta(S_{x_0^2x_1})^2,\\
 &&\zeta(S_{x_0x_1^2x_0x_1^4})\}
 \end{eqnarray*}  
\item $n=9,d_9=5,$ 
\begin{eqnarray*} {\cal Z}_9&=&\mathrm{span}_{\Q}\{\zeta(\Sigma_{y_9}), \zeta(\Sigma_{y_2})^2\zeta(\Sigma_{y_5}), \zeta(\Sigma_{y_2})\zeta(\Sigma_{y_7}),\zeta(\Sigma_{y_2})^3\zeta(\Sigma_{y_3}),\\
&& \zeta(\Sigma_{y_3})^3\} \\       &=&\mathrm{span}_{\Q}\{\zeta(S_{x_0^8x_1}), \zeta(S_{x_0x_1})^2\zeta(S_{x_0^4x_1}), \zeta(S_{x_0^2x_1})^3, \zeta(S_{x_0x_1})\zeta(S_{x_0^6x_1}),\\
 && \zeta(S_{x_0x_1})^3\zeta(S_{x_0^2x_1})\}
 \end{eqnarray*}  
\item $n=10,d_{10}=7,$ \begin{eqnarray*} {\cal Z}_{10}&=&\mathrm{span}_{\Q}\{\zeta(\Sigma_{y_2})^5, \zeta(\Sigma_{y_5})^2, \zeta(\Sigma_{y_3y_1^7}),\zeta(\Sigma_{y_2})\zeta(\Sigma_{y_3})\zeta(\Sigma_{y_5}) ,\\ 
&& \zeta(\Sigma_{y_2})^2\zeta(\Sigma_{y_3})^2,\zeta(\Sigma_{y_3})\zeta(\Sigma_{y_7}),\zeta(\Sigma_{y_2})\zeta(\Sigma_{y_3y_1^5})\}\\
 &=& \mathrm{span}_{\Q}\{\zeta(S_{x_0^4x_1})^2, \zeta(S_{x_0^4x_1})\zeta(S_{x_0^2x_1})\zeta(S_{x_0x_1}),\zeta(S_{x_0x_1})^2\zeta(S_{x_0^2x_1})^2,\\
 &&\zeta(S_{x_0x_1})^5,\zeta(S_{x_0x_1^3x_0x_1^5}), \zeta(S_{x_0^6x_1})\zeta(S_{x_0^2x_1}), \zeta(S_{x_0x_1})\zeta(S_{x_0x_1^2x_0x_1^4})\}
 \end{eqnarray*}  
\item $n=11,d_{11}=9,$ \begin{eqnarray*} {\cal Z}_{11}&=&\mathrm{span}_{\Q}\{\zeta(\Sigma_{y_{11}}), \zeta(\Sigma_{y_2})^2\zeta(\Sigma_{y_7}), \zeta(\Sigma_{y_2})\zeta(\Sigma_{y_9}), \zeta(\Sigma_{y_2})^3\zeta(\Sigma_{y_5}),\\
 &&\zeta(\Sigma_{y_2y_1^9}),\zeta(\Sigma_{y_3})^2\zeta(\Sigma_{y_5}),  \zeta(\Sigma_{y_2})\zeta(\Sigma_{y_3})^3, \zeta(\Sigma_{y_2})^2\zeta(\Sigma_{y_7}),\\ &&\zeta(\Sigma_{y_3})\zeta(\Sigma_{y_3y_1^5})\}\\
 &=& \mathrm{span}_{\Q}\{\zeta(S_{x_0^{10}x_1}), \zeta(S_{x_0^4x_1})\zeta(S_{x_0^2x_1})^2,\zeta(S_{x_0^2x_1})\zeta(S_{x_0x_1})^3\zeta(S_{x_0x_1}),\\
 &&\zeta(S_{x_0^2x_1})\zeta(S_{x_0x_1})^4, \zeta(S_{x_0^4x_1})\zeta(S_{x_0x_1})^3, \zeta(S_{x_0^2x_1})\zeta(S_{x_0x_1^2x_0x_1^4}),\\ 
 &&\zeta(S_{x_0x_1^2x_0x_1^2x_0x_1^4}),  \zeta(S_{x_0^6x_1})\zeta(S_{x_0x_1})^2,
 \zeta(S_{x_0^8x_1})\zeta(S_{x_0x_1})
\}
 \end{eqnarray*}  
\item $n=12,d_{12}=12,$ \begin{eqnarray*} {\cal Z}_{12}&=&\mathrm{span}_{\Q}\{\zeta(\Sigma_{y_2})^6, \zeta(\Sigma_{y_3})^4, \zeta(\Sigma_{y_2})\zeta(\Sigma_{y_5})^2, \zeta(\Sigma_{y_3}\Sigma_{y_1^9}), \zeta(\Sigma_{y_2^2}\Sigma_{y_1^8}), \\ 
 &&\zeta(\Sigma_{y_2})^3\zeta(\Sigma_{y_3})^2,\zeta(\Sigma_{y_3})\zeta(\Sigma_{y_9}),\zeta(\Sigma_{y_2})\zeta(\Sigma_{y_3}\Sigma_{y_1^7}), \zeta(\Sigma_{y_5})\zeta(\Sigma_{y_7}),\\ 
 &&\zeta(\Sigma_{y_2})\zeta(\Sigma_{y_3})\zeta(\Sigma_{y_7}),\zeta(\Sigma_{y_2})^2\zeta(\Sigma_{y_3})\zeta(\Sigma_{y_5}),  \zeta(\Sigma_{y_2})^2\zeta(\Sigma_{y_3}\Sigma_{y_1^5})\}\\
 &=&\mathrm{span}_{\Q}\{\zeta(S_{x_0x_1})^6, \zeta(S_{x_0^2x_1})^4 , \zeta(S_{x_0x_1x_0x_1^9}),\zeta(S_{x_0x_1})^2\zeta(S_{x_0x_1^2x_0x_2^4}), \\ 
 &&\zeta(S_{x_0^4x_1})\zeta(S_{x_0^6x_1}),\zeta(S_{x_0x_1})^3\zeta(S_{x_0^2x_1})^2, \zeta(S_{x_0x_1})^2\zeta(S_{x_0^2x_1})\zeta(S_{x_0^4x_1}),\\ 
 && \zeta(S_{x_0x_1})\zeta(S_{x_0^2x_1})\zeta(S_{x_0^6x_1}), \zeta(S_{x_0^2x_1})\zeta(S_{x_0^8x_1}), \zeta(S_{x_0x_1})\zeta(S_{x_0^4x_1})^2,\\
 &&   \zeta(S_{x_0x_1})\zeta(S_{x_0x_1^3x_0x_1^5}),\zeta(S_{x_0^3x_1x_0x_1^7})\}.
\end{eqnarray*}
\end{itemize}

We can see that these dimensions satisfy the following recurrence \cite{zagier}
\begin{eqnarray*}
d_1=0,d_2=d_3=1&\mbox{and}&\forall n\ge4,d_n=d_{n-2}+d_{n-3}.
\end{eqnarray*}  
This means that, up to weight $12$, our results obtained by the previous algorithms verify the Zagier's dimension conjecture.
As a consequence, this conjecture holds up to weight $12$ if and only if the irreducible polyzetas,
contained in each two following different lists, are algebraically independent (see \cite{VJM} for a discussion).
$$\begin{array}{|l||l||l|}
\hline
n& \text{irreducible polyzetas on }\{\Sigma_l\}_{l\in \Lyn Y} &\text{irreducible polyzetas on  }\{S_l\}_{l\in \Lyn X}\\
 \hline\hline
2& \zeta(\Sigma_{y_2})  &\zeta(S_{x_0x_1})\\
\hline
3&\zeta(\Sigma_{y_3})&\zeta(S_{x_0^2x_1})\\
\hline
4& & \\
\hline
5&\zeta(\Sigma_{y_5})&\zeta(S_{x_0^4x_1})\\
\hline
6& & \\
\hline
7&\zeta(\Sigma_{y_7})& \zeta(S_{x_0^6x_1})\\
\hline
8&\zeta(\Sigma_{y_3y_1^5})&\zeta(S_{x_0x_1^2x_0x_1^4})\\
\hline
9& \zeta(\Sigma_{y_9})&\zeta(S_{x_0^8x_1})\\
\hline
10& \zeta(\Sigma_{y_3y_1^7})&\zeta(S_{x_0x_1^2x_0x_1^6})\\
\hline
11& \zeta(\Sigma_{y_{11}}),\ \zeta(\Sigma_{y_2y_1^9})&
\zeta(S_{x_0^{10}x_1}),\ \zeta(S_{x_0x_1^2x_0x_1^2x_0x_1^4})\\
\hline
12&\zeta(\Sigma_{y_2^
 2y_1^8}),\ \zeta(\Sigma_{y_3y_1^9})&\zeta(S_{x_0x_1x_0x_1^9}),\ \zeta(S_{x_0^3x_1x_0x_1^7})\\
 \hline
\end{array}$$
\begin{center}
Table $2$: List of irreducible polyzetas up to weight $12$.
\end{center}

By Example \ref{different}, in general, one has
\begin{eqnarray*}
\forall l\in\Lyn Y,\pi_X(\Sigma_l)\neq S_{\pi_Xl}&\mbox{and}&
\forall l\in\Lyn X\setminus\{x_0\},\pi_Y(S_l)\neq\Sigma_{\pi_Yl}.
\end{eqnarray*}
This does not occur, due to a lemma by D. Perrin, with the Lyndon words themselves on which
$\{\zeta(l)\}_{l\in\Lyn Y}$ (or $\{\zeta(l)\}_{l\in\Lyn X}$) was provided in \cite{bigotte,MP,elwardi}.
Hence, we insist on the fact that $\{\zeta(\Sigma_l)\}_{l\in \Lyn Y}$ and $\{\zeta(S_l)\}_{l\in \Lyn X}$
provide two different systems of local coordinates and two lists of irreducible polyzetas (see Table $2$).

\section{Conclusion}
In the classical theory of (finite-dimensional) Lie groups, every ordered basis of the Lie algebra provides a system of local coordinates
of a suitable neighbourhood of the unity (of the group) via an ordered product of one-parameter groups corresponding to the (ordered) basis.

Here, we get a perfect analogue of this geometrical picture for the Hausdorff groups (in shuffle and stuffle Hopf algebras)
through Sch\"utzenberger's factorization. This does not depend on the regularization of shuffle and quasi-shuffle.

Moreover, through the bridge equation \eqref{regularization} which relates two elements on these groups and an identification
of the local coordinates of the L.H.S. and R.H.S. of \eqref{regularization2} which involve only convergent polyzetas as local
coordinates, we get, up to weight $12$,
\begin{itemize}
\item a confirmation of the Zagier's dimension conjecture,
\item two families of irreducible polyzetas ({\it i.e} two algebraic bases for polyzetas),
\end{itemize}
which are not due to the regularized double-shuffle relations (and we do not need any regularization).

This implementation will be used, in our forthcoming work, to determine the asymptotic expansions of harmonic sums
via Euler-Maclaurin formula.  
\nocite{DT, flajoletsalvy, CMM, IKZ, kaneko, orsay, BDMKT}
\section*{Bibliography}
\bibliography{mybib}{}
\bibliographystyle{plain}

\end{document}